\newtheorem{theorem}{Theorem}[section]
\newtheorem{definition}[theorem]{Definition}
\newtheorem{proposition}[theorem]{Proposition}
\newtheorem{conjecture}[theorem]{Conjecture}
\newtheorem{corollary}[theorem]{Corollary}
\newtheorem{lemma}[theorem]{Lemma}
\newtheorem{remark}[theorem]{Remark}
\newtheorem{example}[theorem]{Example}
\newtheorem{examples}[theorem]{Examples}
\newtheorem{foo}[theorem]{Remarks}
\newenvironment{proof}{\addvspace{\medskipamount}\par\noindent{\it Proof}.}
{\unskip\nobreak\hfill$\Box$\par\addvspace{\medskipamount}}
\newcommand{\essinf}{\mathop{\rm ess\,inf}}
\newcommand{\esssup}{\mathop{\rm ess\,sup}}
\newcommand{\E}[1]{\mathbb{E} \left[#1\right]}
\title{ On the dynamic representation of some time-inconsistent risk measures in a Brownian filtration}
\author{Julio Backhoff Veraguas\footnote{Vienna University of Technology, Institute of Statistics and Mathematical Methods in Economics (E105-7), Wiedner Hauptstraße 8-10
A-1040 Vienna, Austria (email:julio.backhoff@tuwien.ac.at, +435880110575)}\and Ludovic Tangpi\footnote{Universit\"at Wien, Fakult\"at f\"ur Mathematik, Oskar-Morgenstern-Platz 1, A-1090 Wien, Austria (email:ludovic.tangpi@univie.ac.at, +431427750447) Financial support from Vienna Science and Technology Fund (WWTF) under Grant MA 14-008 is gratefully acknowledged.}}
\begin{document}

\maketitle

\abstract{It is well-known from the work of Kupper and Schachermayer that most law-invariant risk measures do not admit a time-consistent representation.
In this work we show that in a Brownian filtration the ``Optimized Certainty Equivalent'' risk measures of Ben-Tal and Teboulle
can be computed through PDE techniques, i.e.\ dynamically. This can be seen as a substitute of sorts whenever they lack time consistency, and covers the cases of  conditional value-at-risk and monotone mean-variance. 
Our method consists of focusing on the convex dual representation, which suggests extending the state space. With this we can obtain a dynamic programming principle and use stochastic control techniques, along with the theory of viscosity solutions, which we must adapt to cover the present  singular situation.
\vspace{.2cm}

\textbf{MSC 2010}: 93E20, 91G80, 60H30, 49N10, 35Q93, 35D40.

\textbf{Keywords}: Time-inconsistency, risk measures, optimized certainty equivalent, HJB equation, viscosity solution, unbounded stochastic control problem, dynamic programming principle, singular Hamiltonian.
  }

\section{Introduction}

Let $(\Omega, {\cal F}, P)$ be a probability space equipped with the completed filtration $({\cal F}_t)_{t \in [0,T]}$ of a $d$-dimensional Brownian motion $W$. We assume $T\in \mathbb{R}_+$ and ${\cal F}={\cal F}_T$. A functional $\rho: L^\infty({\cal F}) \to (-\infty, +\infty]$ that is convex, increasing and cash-invariant\footnote{$\rho(X+ c) = \rho(X) +c$ for all $X \in L^\infty({\cal F})$ and $c \in \mathbb{R} $. Translation invariance is a synonym for this.} is called convex risk measure\footnote{In fact, it is  $\tilde{\rho}(X):= \rho(-X)$ that satisfies the risk measures axioms as developed in \citet{artzner01} and \citet{FS3dr}, but we will work with the increasing functional $\rho$ for ease of notation.}.
The simplest example of risk measure is the mathematical expectation $\rho(X):= E[X]$.
By the martingale representation theorem, it satisfies the dynamic representation $E[X] =  X - \int_0^TZ_u\,dW_u$ where $Z$ is a $W$-integrable process.
Deriving such dynamic representations for general risk measures has given rise to a vast literature, mainly because they provide insight into the structure of the risk measure itself and due to their potential relevance in applications (for instance, in dealing with stochastic control problems as in \citet{rouge}).
Such dynamic representations are well understood in the case where $\rho$ stems from a dynamic convex risk measure, that is $\rho = \rho_{0,T}$ and
 $(\rho_{\nu,\tau})_{0\le \nu\le\tau\le T}$ (with $\nu,\tau$ being stopping times) is a family of functionals $\rho_{\nu,\tau}: L^\infty({\cal F}_\tau) \to L^\infty({\cal F}_\nu)$.
The main condition under which a dynamic representation can be derived is time-consistency\footnote{This condition is also known as the flow property.
We refer to \citet{cheridito01,delbaen05,artzner02,RS,defsca,foellmerpenner} for discussions on the consequences of time-consistency.}, which amounts to 
$$\rho_{\sigma, \nu}(X)=\rho_{\sigma,\tau}(\rho_{\tau,\nu}(X))$$ for all stopping times $0\le \sigma\le \tau\le \nu\le T$ and $X \in L^\infty({\cal F}_\nu)$. In fact, in this case, for every $X \in L^\infty$ and every $t \in [0,T]$ one has $\rho_{t,T}(X) = Y_t$ where $(Y,Z)$ is the unique (minimal super-)solution of the backward stochastic differential equation\footnote{Throughout the paper, equalities and other ``pointwise'' relations are understood in the $P$-a.s.\ sense.} (BSDE)
\begin{equation}\textstyle
\label{eq:bsde}
	Y_t = X + \int_t^Tg_u(Z_u)\,du - \int_t^TZ_u\,dW_u\,,
\end{equation}
for a given function $g:[0,T]\times \Omega\times \mathbb{R}^d \to \mathbb{R}$,
see \citet{memin02}, \citet{Del-Peng-Ros} and \citet{tarpodual}.
Using the well-established link between BSDE and partial differential equations, these representations show that in the Markovian setting, $\rho_{t,T}$ can be written as the viscosity solution, or the minimal viscosity supersolution of a non-linear PDE, see \citet{karoui01} and \citet{Dra-Mai} respectively.

Time-consistency plays a crucial role in the aforementioned dynamic representation results.
However, as shown by \citet{kupper02}, most commonly used (law invariant) risk measures, such as the conditional value-at-risk (also referred to as tail or average value-at-risk), do not enjoy this property. Notable exceptions are the expected value and the so-called entropic risk measure. The declared aim of this paper is to show that nevertheless many interesting time-inconsistent risk measures can be computed dynamically. {This is achieved by first establishing a relevant dynamic programming principle in an enlarged state space, and then through its infinitesimal counterpart: non-linear PDEs. In our opinion, this serves as a replacement of sorts for the lack of time-consistency.}

In this work, we focus on the case of so-called optimized certainty equivalent (OCE) risk measures; see \citet{ben-tal00,ben-tal01}. This is a class containing time-consistent risk measures such as the entropic one, as well as time-inconsistent ones such as the conditional value-at-risk of \citet{Rock-Uryasev}, the monotone mean-variance of \citet{MMR2006}, and more generally risk measures with power-type penalty functions (e.g.\ as the R\'enyi divergence).
This covers some of the most relevant measures of risk available. For instance, the conditional value-at-risk (also known as expected shortfall) has been praised, and its adoption recommended, by the Basel III Committee in the following terms%
\begin{displayquote}
``... the current framework's reliance on VaR (value-at-risk) as a quantitative risk metric raises a 
number  of  issues,  most  notably  the  inability  of  the  measure  to  capture  the  “tail  risk”  of  the  loss  
distribution.  The  Committee  has  therefore  decided  to  use  an  expected  shortfall  (ES)  measure  for  the  
internal  models
-based  approach  and  will  determine  the  risk  weights  for  the 
revised 
standardised 
approach  using  an  ES  methodology...''
\end{displayquote}
see \cite[Page 18]{Basel}.
\vspace{3pt}

{In Section \ref{sec: non OCE} we shall further study the extension of our approach outside this class, namely the to so-called ``utility-based expected shortfall'' of \citet{FS3dr}, and we show that our results do not fully carry over to this class of risk measures.} Regarding the claims, i.e.\ the random variables, whose risk we aim to evaluate/compute, we shall be mainly concerned with what we call ``Markovian claims.'' These are bounded random variables of the form 
\begin{equation}\textstyle
X=f(Y_T)+\int_0^T g(t,Y_t)dt,\label{eq markov claim}
\end{equation} 
where now $Y$ denotes an It\^o-diffusion. We can think of such claims as (limits of) static positions written on a diffusion model. In Section \ref{sec: heuristic general claims} we shall describe how our method can be adapted to accommodate more general claims.

The main result in this article is Theorem \ref{thm:exist}. It states that for most OCE risk measures, the risk of a Markovian claim (namely $\rho(X)$) can be computed dynamically as the initial value of a non-linear Hamilton-Jacobi-Bellman (HJB) partial differential equation going backwards in time. 
The road leading to this result starts with the dual representation of the risk measure and a simple ``enlargement of state space'' idea which allows to interpret the evaluation of the risk measure as a stochastic optimal control problem of its own. 
This point of view allows to obtain a suitable dynamic programming principle (DPP) in the mentioned enlarged state space; see Corollary \ref{cor:bellman}. We stress that we can obtain the DPP without going through the typicall technical hurdles associated to it, by profiting from the specific form of the risk measures we analyze\footnote{In fact a DPP of sorts holds in greater generality than we need for the applications in this article; see Proposition \ref{prop: Bellman gnral}.}. As usual in stochastic control theory, we leverage on this DPP to obtain the aforementioned HJB equation.
This equation characterizes the value function of the named stochastic control problem as its (minimal) viscosity solution. Both the stochastic control problem and the HJB equation are in principle very intractable and degenerate; for instance the associated Hamiltonian may explode (one says the problem/Hamiltonian is singular). This gives rise to most technical difficulties we encounter, and our efforts are largely devoted to dealing with them through several approximations and reductions.

For our main result, we have in mind the risk evaluation of claims as in \eqref{eq markov claim}, where $f,g$ are continuous but otherwise rough. This is motivated by financial applications, since most interesting (vanilla) options are functions with kinks evaluated on the underlying price process. This makes necessary the approach with viscosity solutions just described, as we show via examples. We will nevertheless explore the question of smoothness and existence of classical solutions for our HJB equation in Section \ref{sec examples}. There we focus on concrete OCE risk measures and make all necessary smoothness assumptions on the data of the problem.

There already exists a body of literature on 
the efficient handling of time-inconsistency in the framework of risk measures. 
We refer the reader to \citet{PflugPichlerinconsistency1,PflugPichlerinconsistency2,
Bauerleinconsistent,CTMP} for a discrete-time set-up and \citet{MillerYang,Kar-Ma-Zha,Oksendaltrivial} and the references therein for a continuous-time one.
Observe that these  articles go beyond risk evaluation and consider decision making (i.e.\ risk minimization) on top of that, whereas the present work is concerned with dynamic representations alone. We see this as a necessary and challenging first step, and we will address the actual risk minimization problem in a follow-up work. 
The idea of ``enlarging the state space'' is also present in the discrete-time formulations, whereas HJB equations also appear in the continuous-time setting of \citet{MillerYang}, albeit employed in a very different way. The article closest to ours from a methodological point of view is \citet{Oksendaltrivial}; the main difference is that the authors work in a jump-diffusion setting and start by assuming existence of classical solutions (as opposed to viscosity ones). 
For convenience of the reader, we sketch in Section \ref{subsection Jump case} the mentioned jump-diffusion setting, but we leave open the rigorous treatment of the associated non-local HJB from a viscosity perspective; we expect that similar but more involved arguments as in the Brownian setting are applicable here. Other approaches to time-inconsistency can be found in e.g.\ \citet{Shapiro-letter,EkelandLazrak,ZhouLi}.  

As we have observed, the stochastic control representation we obtain involves a singular Hamiltonian. We refer to \citet{DaLio06,Dalio08}, \citet[Chap. 4]{Pham}, and references therein for results in this direction. Our stochastic control problem does not have the structure needed for these works, so we have to argue in a self-contained way; see Definition \ref{def viscosity} for the concept of viscosity solution we consider, and the discussion thereafter. A previous version of our work \cite{BT2016} uses the Stochastic-Perron method of \citet{Bayraktar-sir} to prove a milder version of our main result Theorem \ref{thm:exist}. {The current approach rests on the DPP, whose rigorous proof is seemingly direct owing to the structure of the problem at hands, and is therefore a more classical one. See the standard references \citet{Flem-Soner-second,YongZhou} for DPP in continuous time stochastic control, as well as \citet{Bou-Tou11} and \citet{ElKaroui-Tan13} for more recent developments.}

The article is organized as follows. In Section \ref{sec: setting and main results} we outline the setting of the problem, we provide and prove our main results (viscosity characterization and DPP). Then in Section \ref{sec examples} we provide examples and explore the issue of existence of classical solutions to our HJB equation. Section \ref{sec: extensions} is devoted to extending the applicability of our main result to more general claims, a broader class of risk measures, and the setting with jumps. Finally we provide some pending proofs in the appendix.

\section{PDE representation of Optimized Certainty Equivalents}
\label{sec: setting and main results}

\subsection{Setting and main result}
We call a convex function $l:\mathbb{R} \to \mathbb{R}$ \textit{loss function} if it is increasing, and satisfies $l(0) = 0$.
Every loss function is then continuous.
Denote $$l^\ast(z):=\sup_{x \in\mathbb{R} }\{xz - l(x)\}, \,\,\, z\ge 0,$$ the convex conjugate of $l$ and by $\text{dom}(l^*):= \{z\in \mathbb{R}_+: l^*(z)<\infty\}$ its domain.
Observe that necessarily $l^*\geq 0$. 
In what follows, we always assume that 
 the loss function satisfies the conditions
\begin{itemize}
\item[(N)]: $l^*(1) =0$. \label{norm}
\item[(C)]: $l(x)>x$ for all $x$ such that $|x|$ is large enough.
\end{itemize}
\begin{definition}
	The optimized certainty equivalent\footnote{This corresponds to the standard OCE risk measure up to a minus sign.} (OCE) associated to $l$ is the functional $\rho$ given by
	\begin{equation}\label{eq inf min}
		\rho(X):= \inf_{r \in \mathbb{R}}(E[l( X-r)]+r).
	\end{equation}
\end{definition}
Condition (N) above ensures that
$\rho(0) = 0$.
This is a normalization condition which is standard in risk measures theory, see for instance \citet{Del-Peng-Ros}, but of course mathematically non-essential. Condition (C) guarantees that the infimum in \eqref{eq inf  min} is attained and behaves stably, and is equivalent to the existence of $x_{-}<0<x_{+}$ s.t. $l(x_\pm)>x_\pm$. It also entails the non-emptiness of the interior of $\text{dom}(l^*)$. %
  This setting covers many risk measures which we will encounter in Section \ref{sec examples}, for instance the entropic one, the conditional value-at-risk (CVaR) and the monotone mean-variance, as well as many others. On the other hand, our assumptions rule out the risk neutral case $\rho(X)= E[X]$ of $l(x)= x$.

Let ${\cal O}$  be the interior of $\text{dom}(l^*)$, namely $${\cal O}:= \mbox{int}(\text{dom}(l^*)).$$ 
For every $(s, y) \in [0,T]\times \mathbb{R}^m $, we consider the It\^o diffusion $Y^{s,y}$ given by%
\begin{align*}
dY^{s,y}_t &= b(t,Y^{s,y}_t)dt+\sigma(t,Y^{s,y}_t)dW_t, \quad t\ge s\\
Y^{s,y}_s&= y
\end{align*}
for two given functions $b$ and $\sigma$.
Thus seen, the process $Y^{0,y}$ will be the ``underlying'' upon which claims are written. The claims we shall mostly deal with, and whose risk $\rho(X)$ we want to compute, are assumed to be of the following ``Markovian form'':
\begin{align}\label{def markovian claim}
\textstyle 
X=f(Y_T^{0,y})+ \int_0^Tg(t,Y^{0,y}_t)\,dt.
\end{align}
We will make the following assumptions on the functions $b$, $\sigma$, $f$ and $g$:
\begin{itemize}
	\item[(A1)]\label{A1}  $b:[0,T]\times \mathbb{R}^m\to\mathbb{R}^m$ and $\sigma:[0,T]\times \mathbb{R}^m\to\mathbb{R}^{m\times d}$ are continuously differentiable and there exist $k_1,\lambda_1\ge 0$ such that
	\begin{equation*}
		|b(t,y) - b(t',y')|+ |\sigma(t,y) - \sigma(t',y')|\le k_1(|y-y'|+|t-t'|) \quad \text{and } |b(t,y)|+|\sigma(t,y)|\le \lambda_1(1 + |y|),
	\end{equation*}
	for all $t ,t'\in [0,T]$ and $y,y'\in \mathbb{R}^m$.
\end{itemize}
\begin{itemize}
	\item[(A2)] $g:[0,T]\times \mathbb{R}^m\to\mathbb{R}$ and $f:\mathbb{R}^m\to\mathbb{R}$ are continuous and there exist $k_2,\lambda_2\ge 0$ such that
	\begin{equation*}
		|g(t,y) - g(t,y')|+ |f(y) - f(y')|\le k_2|y-y'| \quad \text{and } |g(t,y)|+|f(y)|\le \lambda_2,
	\end{equation*}
	for all $t \in [0,T]$ and $y,y'\in \mathbb{R}^m$.\label{A3}
\end{itemize}

Let us now describe the partial differential equation that will allow us to compute the risk of such claims. The reader eager to know where this PDE comes from, may consult Propositions \ref{lem PDE} and \ref{prop:struct-V} below; otherwise it suffices to say that the PDE arises from the stochastic control interpretation of the dual representation of OCEs when seen in an enlarged state space (the $z$ variable denoting a generic element there). We first fix some notation; let us introduce the function $$\psi(y,z):=f(y)z -l^*(z),$$ and the $\mathbb{R}\cup\{+\infty\}$-valued function\footnote{$A'$ is the transpose of $A$.} 
\begin{equation*}
	H(t, y,z, \gamma, \Gamma) := \langle B(t,y,z), \gamma\rangle + z g(t, y) + \frac{1}{2}\sup_{\beta \in \mathbb{R}^d}tr(AA'\Gamma),
\end{equation*}
defined on $[0,T]\times \mathbb{R}^m\times {\cal O}\times\mathbb{R}^{m+1}\times \mathbb{R}^{(m+1)\times (m+1)} $, where
  $$B(t,y,z):= (b(t, y), 0)', \text{ and } A(t,y,z,\beta): = (\sigma(t,y),z\beta)'.$$
\begin{definition}\label{def viscosity}
	A continuous function $v$ defined on $[0,T]\times\mathbb{R}^m\times {\cal O}$ is said to be a viscosity supersolution of the Hamilton-Jacobi-Bellman (HJB) equation
	\begin{align}
	\label{eq:hjb}
	\begin{cases}
	&\partial_tV + H(s, y,z, DV, D^2V) = 0,\quad (s,y,z)\in [0,T)\times \mathbb{R}^m\times {\cal O}\\
	&V(T,y,z) = \psi(y,z),\quad (y,z)\in \mathbb{R}^m\times {\cal O},
	\end{cases}
	\end{align}
	 if for all $x_0=(s_0,y_0,z_0)\in [0,T]\times\mathbb{R}^m\times {\cal O}$ and $\varphi \in C^2([0,T]\times\mathbb{R}^m\times {\cal O})$ such that $x_0$ is a local minimizer of $v-\varphi$ and $\varphi(x_0) = v(x_0)$, we have $v(x_0)\ge \psi(y_0,z_0)$ $s_0=T$, and otherwise
	\begin{equation*}
		\partial_t\varphi(x_0) + H(x_0, D\varphi(x_0), D^2\varphi(x_0)) \le 0.
	\end{equation*}

	A continuous function $v$ defined on $[0,T]\times\mathbb{R}^m\times {\cal O}$ is said to be a viscosity subsolution of \eqref{eq:hjb}
	 if for all $x_0=(s_0,y_0,z_0)\in [0,T]\times\mathbb{R}^m\times {\cal O}$ and $\varphi \in C^2([0,T]\times\mathbb{R}^m\times {\cal O})$ such that $x_0$ is a local maximizer of $v -\varphi$ and $\varphi(x_0) = v(x_0)$, we have $v(x_0)\le \psi(y_0,z_0)$ if $s_0=T$, and otherwise if further $(x_0, D\varphi(x_0), D^2\varphi(x_0))\in \text{intdom}(H)$ we have 
	\begin{equation*}
		\partial_t\varphi(x_0) + H(x_0, D\varphi(x_0), D^2\varphi(x_0)) \ge 0.
	\end{equation*}

	A function is a viscosity solution if it is both a viscosity sub- and supersolution, and a viscosity (super)solution $\bar{v}$ of \eqref{eq:hjb} is said to be minimal for a class of functions if for every viscosity (super)solution $v$ of \eqref{eq:hjb} in the given class one has $\bar{v}(s,y,z)\le v(s,y,z)$ for all $(s,y,z)\in [0,T]\times\mathbb{R}^m\times {\cal O}$.

\end{definition}
	Spelled out explicitly in terms of the data, the HJB equation \eqref{eq:hjb} takes the form
	\begin{align*}
		\partial_tV+b(s,y)\partial_yV+&\frac{1}{2}tr\left(\sigma(s,y)\sigma(s,y)'\partial^2_{yy}V\right)\\
		&+\sup_{\beta\in\mathbb{R}^d}\left [\frac{1}{2} z^2|\beta|^2\partial^2_{zz}V+ z \,\partial^2_{yz}V \sigma(s,y)\beta\right ] + zg(s,y) =0,
	\end{align*}
with
$$V(T,y,z)= f(y)z - l^*(z).$$
Thus, the Hamiltonian $H$ takes values on the extended real line, since the control space is unbounded (i.e.\ the problem/Hamiltonian is singular).
Our HJB equation \eqref{eq:hjb} is different from the standard HJB used in such settings, see e.g. \citet[Chap. 4]{Pham}. In the usual approach for singular Hamiltonians, one specifies a variational inequality with help of an auxiliary continuous function $G$ which signals/attests the points where the Hamiltonian explodes (typically $G\geq 0 \iff H<\infty$). However in our setting it is easy to see that there is no such continuous function. This justifies the definition of viscosity subsolutions we consider.
The cost to pay for this change of the definition is uniqueness, since there is no readily applicable comparison-principle type of result.
Of course, when $H$ is finite valued our definition coincides with the standard definition.

In the next subsection, we prove the following dynamic representation of OCEs, which is the main result of this article. It characterizes the value of an OCE as the initial value of the minimal solution of \eqref{eq:hjb}, which is uniquely determined of course. This is done for Markovian claims; {see however Section \ref{sec: heuristic general claims} for an extension of this approach.} 

\begin{theorem}
\label{thm:exist}%
	If (A1)-(A2) hold, then for $X^y:=f(Y_T^{0,y})+ \int_0^Tg(t,Y^{0,y}_t)\,dt$ we have $$\rho(X^y) = V(0,y,1),$$ where $V:[0,T]\times \mathbb{R}^m\times {\cal O}\to \mathbb{R}$ is a viscosity solution of the PDE \eqref{eq:hjb}. Furthermore if either  $\text{dom}(l^*)$ is bounded, or $l^*$ is finite and has polynomial growth, then  $V$ is the minimal supersolution of \eqref{eq:hjb} in the class of functions with polynomial growth.
\end{theorem}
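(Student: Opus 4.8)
The plan is to pass through the convex-dual (robust) representation of the OCE, recognise the resulting expression as the value function of a stochastic control problem in the enlarged state space $(y,z)$, and then run the dynamic-programming/viscosity machinery; minimality will come from a verification argument against an arbitrary supersolution.

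\emph{Step 1: dual representation and state-space enlargement.} By Fenchel--Moreau duality applied to \eqref{eq inf min} one has $\rho(X)=\sup_{\mathbb{Q}\ll\mathbb{P}}\bigl(\mathbb{E}_{\mathbb{Q}}[X]-\mathbb{E}[l^*(d\mathbb{Q}/d\mathbb{P})]\bigr)$, the supremum effectively ranging only over those $\mathbb{Q}$ with $d\mathbb{Q}/d\mathbb{P}\in\mathrm{dom}(l^*)$ a.s.; condition (N) makes $\mathbb{P}$ itself admissible and (C) guarantees $\mathcal{O}=\mathrm{int}(\mathrm{dom}(l^*))\neq\emptyset$. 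Writing the density martingale $Z_t=\mathbb{E}[d\mathbb{Q}/d\mathbb{P}\mid\mathcal{F}_t]$ and using the martingale representation theorem, $dZ_t=Z_t\beta_t\,dW_t$, $Z_0=1$, for a progressively measurable $\mathbb{R}^d$-valued control $\beta$. Since $g(t,Y_t)$ is $\mathcal{F}_t$-measurable and $Z$ is a martingale, $\mathbb{E}[Z_T\int_0^Tg(t,Y_t)\,dt]=\mathbb{E}[\int_0^TZ_tg(t,Y_t)\,dt]$, so for the Markovian claim \eqref{def markovian claim}
$$\rho(X^y)=\sup_{\beta}\ \mathbb{E}\Bigl[Z_T^{0,1}f(Y_T^{0,y})+\int_0^TZ_t^{0,1}g(t,Y_t^{0,y})\,dt-l^*(Z_T^{0,1})\Bigr],$$
and one defines $V(s,y,z)$ as the analogous supremum started at time $s$ from $(Y_s,Z_s)=(y,z)$ with $z\in\mathcal{O}$; this is the content of Propositions \ref{lem PDE} and \ref{prop:struct-V}.

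\emph{Step 2: DPP and the HJB equation.} I would invoke the dynamic programming principle for $V$ (Corollary \ref{cor:bellman}, itself a consequence of Proposition \ref{prop: Bellman gnral}), which here is reachable directly, without the usual measurable-selection technicalities, owing to the linear-in-$Z$ structure. From the DPP one derives by the standard test-function argument --- plug in $\varphi\in C^2$ touching $V$ at an interior point, apply It\^o to $\varphi(t,Y_t,Z_t)$, and let the time increment vanish --- that $V$ is a viscosity supersolution of \eqref{eq:hjb}, and a viscosity subsolution in the restricted sense of Definition \ref{def viscosity}. The restriction to $\mathrm{intdom}(H)$ is forced by the singularity: the $\beta$-supremum defining $H$ is $+\infty$ unless the $zz$-block of $\Gamma$ is suitably negative and compatible with the mixed term, and there is no continuous ``signalling'' function $G$ with which to write a classical variational inequality. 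Continuity of $V$ up to $\{s=T\}$ with terminal value $\psi(y,z)=f(y)z-l^*(z)$ follows from (A1)--(A2) and dominated convergence, with some care near $\partial\mathcal{O}$ where $l^*$ may blow up; since $f,g$ are merely Lipschitz and $l^*$ possibly non-smooth or infinite-valued, the rigorous construction passes through a family of approximate problems (smoothing $f,g$, regularising/truncating $l^*$, exhausting $\mathcal{O}$ by compacts), solving each and passing to the limit via stability of viscosity solutions together with uniform moment estimates on $Z$.

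\emph{Step 3: minimality.} For the last assertion, let $v$ be any viscosity supersolution of \eqref{eq:hjb} of polynomial growth; the goal is $v\ge V$ on $[0,T]\times\mathbb{R}^m\times\mathcal{O}$, hence in particular $v(0,y,1)\ge\rho(X^y)$. This is a verification argument: fix an admissible control $\beta$, apply a mollified It\^o formula to $v$ along $(Y^{s,y},Z^{s,z})$, bound the drift using the supersolution inequality (the singularity of $H$ is not an obstruction here, since one only needs the inequality $\partial_t\varphi+H\le0$ in the supersolution direction), localise, take expectations to obtain $v(s,y,z)\ge\mathbb{E}[\cdots]$, remove the localisation, and finally take the supremum over $\beta$. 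The two dichotomous hypotheses --- $\mathrm{dom}(l^*)$ bounded, or $l^*$ finite with polynomial growth --- are precisely what furnish the uniform integrability needed to pass the limits: in the bounded-domain case $Z_T^{s,z}$ stays in a fixed compact set, whereas in the polynomial-growth case one controls $\mathbb{E}[(Z_T^{s,z})^p]$ and hence $\mathbb{E}[l^*(Z_T^{s,z})]$ via exponential-martingale estimates. Once $v\ge V$ is shown for all such $v$, $V$ is the minimal polynomially growing supersolution, hence uniquely determined.

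\emph{Main obstacle.} The crux is the singular Hamiltonian together with the unboundedness of the control set: concretely, (i) proving the restricted subsolution property and the correct terminal behaviour near $\partial\mathcal{O}$, and (ii) making the verification argument of Step 3 rigorous in the absence of any comparison principle. It is exactly at point (ii) that the polynomial-growth restriction and the case split on $l^*$ carry the technical weight, and I expect the moment control of $Z$ and of $l^*(Z)$ under those hypotheses, plus the approximation/limit passages, to be the most delicate part of the argument.
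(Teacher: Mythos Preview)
Your Steps 1--2 are broadly in line with the paper's approach (dual representation $\to$ enlarged state space $\to$ DPP $\to$ viscosity properties), but your Step 3 contains a genuine gap, and you are missing the single technical device that the paper uses to make \emph{both} the subsolution property and the minimality go through cleanly.

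\textbf{The gap in Step 3.} You propose to prove $v\ge V$ for an arbitrary viscosity supersolution $v$ by a verification argument: ``apply a mollified It\^o formula to $v$ along $(Y,Z)$, bound the drift using the supersolution inequality, localise, take expectations''. This does not work as stated. A viscosity supersolution is not smooth, so It\^o's formula is unavailable; mollifying $v$ destroys the supersolution property in general (you would need concavity in the second-order slot, which you do not have here because the Hamiltonian is a supremum, i.e.\ convex in $\Gamma$). There is no off-the-shelf ``mollified It\^o'' that turns a bare viscosity supersolution inequality into a drift bound along trajectories. This is precisely the difficulty that the absence of a comparison principle (Remark \ref{rem:unique}) is flagging.

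\textbf{What the paper does instead, and what you are missing.} The paper's key device is to truncate the \emph{control set}, not the data: set ${\cal L}_b^n=\{|\beta|\le n\}$, define the truncated value $V^n$ and truncated Hamiltonian $H^n$. This buys two things simultaneously. First, for the subsolution property: the standard DPP-to-subsolution argument requires uniform-in-$\beta$ SDE estimates when you divide by the time increment and send it to zero; with $|\beta|\le n$ these estimates are available, so $V^n$ is a viscosity subsolution of the $H^n$-equation, and then $V^n\uparrow V$ uniformly on compacts (Dini) plus a stability argument gives the restricted subsolution property for $V$. Your proposed approximation (smooth $f,g$, truncate $l^*$, exhaust $\mathcal{O}$) does not address this issue, because it leaves $\beta$ unbounded. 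Second, and this is the point you miss entirely, the \emph{same} truncation gives minimality for free: since $H^n\le H$, any supersolution $w$ of \eqref{eq:hjb} is automatically a supersolution of the $H^n$-equation; but $H^n$ is finite and continuous, so the \emph{standard} comparison theorem for unbounded domains (under polynomial growth) applies and yields $V^n\le w$; letting $n\to\infty$ gives $V\le w$. No It\^o on non-smooth functions, no verification.

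Finally, your reading of the dichotomy ``$\mathrm{dom}(l^*)$ bounded, or $l^*$ finite with polynomial growth'' is off: its role is not to provide uniform integrability in a verification step, but (i) to ensure that $V$ itself has polynomial growth (Remark \ref{rem growth}), so that it belongs to the class in which minimality is asserted, and (ii) to place the problem in the regime where the cited comparison theorems for $H^n$ apply.
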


To be precise, the proof of Theorem \ref{thm:exist} actually establishes the result for $V$ the value function of a stochastic control problem related to the dual representation of $\rho(X)$. See also Remark \ref{rem growth} for some comments on the growth properties of $V$. 
We emphasize that in principle the question of whether \eqref{eq:hjb} has a viscosity solution  is not amenable to standard methods. 
The main difficulty is the fact that $H$ is singular. On top of that, the function $A$ is not necessarily uniformly bounded nor uniformly Lipschitz. We refer to \citet{DaLio06,Dalio08} and \citet[Chap. 4]{Pham} for a discussion on some of these issues, but stress that our setting is not covered by the results therein.
This leads us to solve the PDE \eqref{eq:hjb} through several reductions and  approximations.
Uniqueness however remains an open problem, see Remark \ref{rem:unique} below.

At this point one could ask whether the approach through viscosity solutions is necessary. Indeed it is: Equation \eqref{eq:hjb} cannot be expected to have classical solutions in general, since $A$ is not uniformly parabolic. The following examples show that $y\mapsto \rho(X^y)$ in the above theorem is not necessarily a differentiable function, as soon as the claim is degenerate or more generally if the diffusion $Y$ is not ``uniformly parabolic''. This has consequences for $V$ of course, as we exemplify in Remark \ref{rem:roughness of V} below.
\begin{example}\label{ex preliminary} \leavevmode
\begin{itemize}
\item[1.] Let $\rho(X)= \log E [e^X]$ be the entropic risk measure, which is an OCE with loss function $l(x)=e^{x}-1$. Let $Y_t =y \in \mathbb{R}$ for all $t$ and $X^y :=f(Y_T)$ for $f$ Lipschitz but non-differentiable. Since $Y$ is deterministic we have $\rho(X^y)= f(y)$. This also holds for arbitrary OCEs. 
\item[2.] We now take $Y_T:=\text{sign}(W_T)+y$ and $X^y=Y_T^+$. From $Y_T$ it is not difficult to build the martingale diffusion $Y$ which ends up at $Y_T$ at time $T$, via the Markov property. The corresponding diffusion coefficient is
$$\textstyle \sigma(t,\cdot)=\sqrt{\frac{2}{\pi(T-t)}}\exp\left\{ -\frac{1}{2} \left  [  \Phi^{-1}\left (\frac{y+1-\cdot}{2}\right )\right ]^2 \right\},$$
where $\Phi$ is the distribution function of a standard Gaussian. In particular $\sigma$ is not uniformly parabolic. For the entropic risk measure we have
$$\textstyle \rho(X^y)=\log\left ( \frac{1}{2}\left (\exp\{[y-1]_+\}+\exp\{[y+1]_+\}\right ) \right ),$$ which is continuous but is not differentiable at $y=\pm 1$.
\item[3.] For $Y_t = W_t-y$, which is a uniformly parabolic model, and $X^y:= Y^+_T$, we have
$\rho(X^y) = \log(E[e^{Y^+_T}]) = \log\int e^{[c]_+} h(y+c)dc $, where $h$ is the density of a centred Gaussian with variance $T$. Thus $\rho(X^y)$ is smooth in $y$. This is the well-known smoothing effect of uniform-noise in action.
\end{itemize}
\end{example}

\begin{remark}
\label{rem:unique}{Even in the time-consistent case discussed in the introduction, a risk measure on a Brownian filtration is not always the unique solution of a backward SDE (respectively a PDE), unless an additional so-called domination condition is satisfied.
In general, the risk measure can only be proved to be the minimal supersolution of a backward SDE (resp.\ PDE), see \citet[Theorem 3.2]{Del-Peng-Ros} and \citet[Theorem 4.7]{tarpodual}. 
}

In our typically time-insconsistent setting, it is clear from the proof of Theorem \ref{thm:exist} that our PDE \eqref{eq:hjb}, accompanied with relevant boundary conditions in the bounded-domain case (see Remark \ref{rmk final uniqueness} for more on this), admits a unique solution if it satisfies a \emph{comparison principle} in the following sense: If $v^1$ and $v^2$ are respectively an upper semicontinuous viscosity subsolution and a lower semicontinuous viscosity supersolution, both of them with polynomial growth, and $v^1(T,\cdot)\le v^2(T,\cdot)$, then $v^1\le v^2$ holds everywhere.
However, little is known about comparison for PDEs in the generality of \eqref{eq:hjb}, owing to the character of the problem as we have repeatedly mentioned. 
By a formal optimization, the PDE \eqref{eq:hjb} can be rewritten has 
\begin{equation*}
	\partial_tV + b(s,y)\partial_yV + \frac{1}{2}tr(\sigma(s,y)\sigma(s,y)')\partial^2_{yy}V + zg(s,y) = \frac{1}{2}\sigma^2(s,y)\frac{(\partial_{ys}^2V)^2}{\partial^2_{zz}V}.
\end{equation*}
Fully nonlinear parabolic PDEs of this kind were studied e.g by \citet{CSTV2007} using the notion of ``BSDE with gamma constraints,'' and uniqueness was obtained there by assuming that a comparison principle holds. It is an open question to establish the actual validity of a comparison principle in our framework.
\end{remark}

\subsection{Proof of Theorem \ref{thm:exist}} \label{subsec proof thm}
The proof of Theorem \ref{thm:exist} will be split into several auxiliary results. The proofs of Lemma \ref{thm:rep} and Proposition \ref{lem PDE} are left to the appendix. 
\begin{lemma}
\label{thm:rep}
The functional $\rho$ maps $L^\infty$ to $\mathbb{R}$, and is a convex, increasing and cash-invariant functional satisfying the representation
\begin{equation}
	\rho(X)\,=\, 	\sup\limits_{Z\in{\cal Z} }\left(E[XZ] -E[l^*( Z)]  \right),\quad X \in L^\infty,\label{dual_rep1}
\end{equation}
where ${\cal Z}:=\{Z \in L^{1}_+: Z\in \text{dom}(l^*),\,E[Z]=1\text{ and } Z\ge c\text{ for some } c>0\}$.
\end{lemma}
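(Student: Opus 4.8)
The plan is to verify the structural properties directly from \eqref{eq inf min} and then to prove the representation \eqref{dual_rep1} by a two-sided estimate built around the Fenchel--Young relation between $l$ and $l^*$. First I would record finiteness: taking $r=0$ and using that $l$ is increasing gives $\rho(X)\le E[l(X)]\le l(\|X\|_\infty)<\infty$, while Fenchel--Young with $z=1$ together with (N) gives $l(x)\ge x-l^*(1)=x$ for all $x$, hence $E[l(X-r)]+r\ge E[X]$ for every $r$ and $\rho(X)\ge E[X]>-\infty$; thus $\rho$ maps $L^\infty$ into $\mathbb{R}$. Cash-invariance follows from the substitution $r\mapsto r-c$ in \eqref{eq inf min}; monotonicity because $l$ increasing gives $l(X-r)\le l(Y-r)$ pointwise whenever $X\le Y$, so $\rho(X)\le\rho(Y)$; convexity because $(X,r)\mapsto E[l(X-r)]+r$ is jointly convex (a monotone linear functional of the affine-then-convex map $(X,r)\mapsto l(X-r)$) and partial minimization preserves convexity. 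For the ``$\le$'' half of \eqref{dual_rep1}, I would fix $Z\in\mathcal{Z}$ and apply $xz-l^*(z)\le l(x)$ pointwise with $x=X-r$, $z=Z$; integrating (everything is integrable since $X\in L^\infty$ and $Z\in L^1$) and using $E[Z]=1$ gives $E[XZ]-E[l^*(Z)]\le E[l(X-r)]+r$ for every $r$, hence $\le\rho(X)$, and therefore $\sup_{Z\in\mathcal{Z}}(\cdots)\le\rho(X)$.

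The substantial direction is ``$\ge$'', which I would obtain in three steps. \emph{Step 1}: the infimum in \eqref{eq inf min} is attained. Writing $\theta(x):=l(x)-x$, which is convex, nonnegative (as just noted) and vanishes at $0$, condition (C) provides $x_-<0<x_+$ with $\theta(x_\pm)>0$, and convexity then forces $\theta(x)\to+\infty$ as $|x|\to\infty$; since $X\in L^\infty$ and $E[l(X-r)]+r=E[\theta(X-r)]+E[X]$, the map $r\mapsto E[l(X-r)]+r$ is convex, finite (hence continuous) and coercive (by Fatou), so it attains its minimum at some $r^*$, with value $\rho(X)$. \emph{Step 2}: comparing one-sided derivatives at $r^*$ and interchanging expectation with one-sided differentiation (legitimate because $X-r^*$ takes values in a compact interval on which the subgradients of $l$ are bounded) yields $E[l'_{-}(X-r^*)]\le 1\le E[l'_{+}(X-r^*)]$; the intermediate value theorem applied to $\lambda\mapsto E[\lambda\,l'_{+}(X-r^*)+(1-\lambda)\,l'_{-}(X-r^*)]$ then produces a bounded measurable $Z^*$ with $Z^*\in\partial l(X-r^*)$ a.s., $E[Z^*]=1$, and $Z^*\ge 0$ (since $l$ is increasing). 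The Fenchel--Young \emph{equality} $l^*(Z^*)=Z^*(X-r^*)-l(X-r^*)$ shows $l^*(Z^*)\in L^\infty$, and integrating it and using $E[Z^*]=1$ gives $E[XZ^*]-E[l^*(Z^*)]=E[l(X-r^*)]+r^*=\rho(X)$. \emph{Step 3}: since $Z^*$ need not be bounded away from $0$, replace it by $Z^*_\varepsilon:=(1-\varepsilon)Z^*+\varepsilon$, which lies in $\mathcal{Z}$ (it is $\ge\varepsilon>0$, has mean $1$, and $l^*(Z^*_\varepsilon)\le(1-\varepsilon)\,l^*(Z^*)$ by convexity of $l^*$ and $l^*(1)=0$); then $E[XZ^*_\varepsilon]-E[l^*(Z^*_\varepsilon)]\ge(1-\varepsilon)\rho(X)+\varepsilon E[X]\to\rho(X)$ as $\varepsilon\downarrow 0$, which closes the inequality.

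The hard part will be Step 2: extracting a measurable selection of the subdifferential of $l$ along $X-r^*$ with prescribed mean $1$, which is exactly where the boundedness of $X$ and the attainment afforded by (C) are used, and which plays the role of a hands-on version of Rockafellar's interchange rule for integral functionals. A more abstract alternative would identify $Z\mapsto E[l^*(Z)]$ as the convex conjugate on $L^1$ of $X\mapsto E[l(X)]$ on $L^\infty$ and treat $\inf_r(E[l(X-r)]+r)$ as a perturbation, interchanging $\inf_r$ with $\sup_Z$; but the direct route above avoids verifying the minimax conditions and produces membership in $\mathcal{Z}$ (in particular the constraint $Z\ge c$) essentially for free via Step 3.
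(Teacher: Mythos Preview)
Your proof is correct and takes a genuinely different route from the paper's. The paper simply cites the unrestricted dual representation $\rho(X)=\sup\{E[XZ]-E[l^*(Z)]:Z\in L^1_+,\,E[Z]=1,\,Z\in\text{dom}(l^*)\}$ from \citet{ben-tal01}, and then reduces to $\mathcal{Z}$ by the same convex-combination trick $Z^c:=c+(1-c)Z$ you use in Step~3; to pass to the limit it invokes continuity of $l^*$ on its domain (which it proves by an elementary convexity argument) together with dominated convergence. Your argument, by contrast, is fully self-contained: you establish attainment of the primal infimum via condition~(C), then construct an explicit dual optimizer $Z^*\in\partial l(X-r^*)$ with $E[Z^*]=1$ by interpolating between the one-sided derivatives, so that Fenchel--Young holds with equality and $E[XZ^*]-E[l^*(Z^*)]=\rho(X)$ exactly. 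This buys you two things: you avoid appealing to an external duality result, and since your $Z^*$ is automatically bounded (as $X-r^*$ ranges in a compact interval), the approximation step needs only the convexity bound $l^*(Z^*_\varepsilon)\le(1-\varepsilon)l^*(Z^*)$ rather than continuity of $l^*$ at possibly infinite boundary points. The paper's route is shorter on the page because it outsources the heavy lifting; yours is more constructive and, as you note, delivers the optimizer and hence membership in $\mathcal{Z}$ essentially for free.
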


Let ${\cal L}_{b}$ be the set of $\mathbb{R}^d$-valued progressively measurable processes that are essentially bounded
and for each $\beta \in {\cal L}_{b}$, put $$\textstyle Z^\beta_s:=\exp\left( \int_0^s\beta_t\,dW_t - \frac{1}{2}\int_0^s|\beta_t|^2\,dt\right),\,\, s \in [0,T].$$ 
In our Brownian filtration, the OCE can be represented in terms of processes $\beta \in {\cal L}_b$ as we now show.%

\begin{proposition}\label{lem PDE}
For every $X \in L^{\infty}$ we have
\begin{equation}\label{eq:reduction}
\rho(X)\,\,=\,\, \sup_{\beta \in \mathcal{L}_b} E\left [ XZ^\beta_T - l^*\left( Z^\beta_T \right )  \right ],
\end{equation}
i.e.\ $\rho(X)$ can be computed over densities with essentially bounded stochastic logarithms.
\end{proposition}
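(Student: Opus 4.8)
The plan is to establish the two inequalities separately. The inequality ``$\ge$'' is routine: it follows from the very definition of $\rho$ and the Fenchel--Young inequality, so here one does \emph{not} invoke Lemma~\ref{thm:rep} (indeed $Z^\beta_T$ need not be bounded away from $0$, hence need not lie in $\mathcal{Z}$). Fix $\beta\in\mathcal{L}_b$; since $\beta$ is essentially bounded, $Z^\beta$ is a true martingale (Novikov), so $E[Z^\beta_T]=1$. If $E[l^*(Z^\beta_T)]=+\infty$ there is nothing to prove, so assume it is finite. Applying $xz\le l(x)+l^*(z)$ (valid for all $x\in\mathbb{R}$, $z\ge0$) pathwise with $x=X-r$, $z=Z^\beta_T$, taking expectations (all terms integrable, as $X\in L^\infty$ and $Z^\beta_T\in L^1$) and using $E[Z^\beta_T]=1$ gives $E[XZ^\beta_T-l^*(Z^\beta_T)]\le E[l(X-r)]+r$; infimum over $r$ and then supremum over $\beta$ yields $\rho(X)\ge\sup_{\beta\in\mathcal{L}_b}E[XZ^\beta_T-l^*(Z^\beta_T)]$.

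For the reverse inequality, by Lemma~\ref{thm:rep} it suffices to produce, for each $Z\in\mathcal{Z}$ with $E[l^*(Z)]<\infty$ (the remaining $Z$ contribute $-\infty$), processes $\gamma^k\in\mathcal{L}_b$ with $E[XZ^{\gamma^k}_T-l^*(Z^{\gamma^k}_T)]\to E[XZ]-E[l^*(Z)]$. I would first perform two reductions. \emph{(i) Interior perturbation:} replace $Z$ by $Z_\lambda:=\lambda Z+(1-\lambda)$, $\lambda\uparrow1$; convexity of $l^*$ together with $l^*(1)=0$ (condition (N)) gives $l^*(Z_\lambda)\le\lambda l^*(Z)$, so the penalty only decreases, $E[XZ_\lambda-l^*(Z_\lambda)]\to E[XZ-l^*(Z)]$, $Z_\lambda\in\mathcal{Z}$, and since $1\in\mathcal{O}$ the variable $Z_\lambda$ is bounded away from every finite endpoint of $\mathrm{dom}(l^*)$. \emph{(ii) Truncation when $\mathrm{dom}(l^*)$ is unbounded above:} with $M_t:=E[Z_\lambda\mid\mathcal F_t]$ (a uniformly integrable, hence a.s.\ bounded, positive martingale) and $\tau_n:=\inf\{t:M_t\ge n\}\wedge T$, the variable $M_{\tau_n}=E[Z_\lambda\mid\mathcal F_{\tau_n}]$ still lies in $\mathrm{dom}(l^*)$, satisfies $E[l^*(M_{\tau_n})]\le E[l^*(Z_\lambda)]$ by Jensen, and $M_{\tau_n}\to Z_\lambda$ a.s.; combining Scheff\'e's lemma for the linear term with the Jensen bound and Fatou's lemma (lower semicontinuity of $l^*$) for the penalty term gives $E[XM_{\tau_n}-l^*(M_{\tau_n})]\to E[XZ_\lambda-l^*(Z_\lambda)]$. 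After (i)--(ii) one may assume $Z$ takes values in a fixed compact interval $[c',C']\subseteq\mathcal O$ with $1\in(c',C')$.

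The core step is a \emph{clip-and-stop} construction. Writing $M_t:=E[Z\mid\mathcal F_t]>0$, one has $M=\mathcal E\!\left(\int\beta\,dW\right)$ for some progressively measurable $\beta$ with $\int_0^T|\beta_s|^2\,ds<\infty$ a.s., and by martingale representation $dM_t=H_t\,dW_t$ with $H=M\beta$. Fix $[c'',C'']\subseteq\mathcal O$ with $[c',C']\subseteq(c'',C'')$, put $H^k:=H\,\mathbf 1_{\{|H|\le k\}}$, $\theta_k:=\inf\{t:M_0+\int_0^tH^k_s\,dW_s\notin[c'',C'']\}$, and $M^k_t:=M_0+\int_0^{t\wedge\theta_k}H^k_s\,dW_s$. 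Then $M^k$ is a bounded positive martingale with values in $[c'',C'']$ and $M^k_0=1$, and $M^k=\mathcal E\!\left(\int\gamma^k\,dW\right)$ with $\gamma^k:=(H^k/M^k)\mathbf 1_{[0,\theta_k]}$ essentially bounded by $k/c''$, so $\gamma^k\in\mathcal L_b$ and $Z^{\gamma^k}_T=M^k_T\in[c'',C'']\subseteq\mathrm{dom}(l^*)$, where $l^*$ is continuous and bounded. Since $\int_0^T|H^k_s-H_s|^2\,ds\to0$ a.s., the martingales $M_0+\int_0^\cdot H^k\,dW$ converge to $M$ uniformly in probability; as $\mathrm{dist}(M,\partial[c'',C''])$ is bounded below by a positive constant this forces $P(\theta_k<T)\to0$ and hence $M^k_T\to M_T=Z$ in probability. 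Passing to the limit, $E[XZ^{\gamma^k}_T]\to E[XZ]$ by Scheff\'e's lemma (all terms having mean $1$) and $E[l^*(Z^{\gamma^k}_T)]\to E[l^*(Z)]$ by bounded convergence, so $E[XZ^{\gamma^k}_T-l^*(Z^{\gamma^k}_T)]\to E[XZ-l^*(Z)]$, completing the proof.

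The main obstacle is that $\mathrm{dom}(l^*)$ may be a bounded interval---this is precisely the conditional value-at-risk case, where $l^*\equiv+\infty$ outside $[0,1/\alpha]$---so that the obvious truncation $\beta^k:=\beta\,\mathbf 1_{\{|\beta|\le k\}}$ produces densities $Z^{\beta^k}_T$ that escape $\mathrm{dom}(l^*)$ with positive probability, making $E[l^*(Z^{\beta^k}_T)]=+\infty$ and the approximants useless. The clip-and-stop construction circumvents this by keeping the approximating densities inside a compact subinterval of $\mathrm{dom}(l^*)$ at all times---note that a stochastic exponential of a bounded process \emph{can} be confined to a compact interval, by switching the integrand off once the exponential reaches the ceiling---and the interior perturbation of step (i) is exactly what supplies the distance to the boundary needed to make this work.
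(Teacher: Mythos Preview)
Your proof is correct and takes a genuinely different route from the paper's in the core approximation step.

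For the easy inequality $\rho(X)\ge\sup_\beta(\cdots)$, you argue via Fenchel--Young directly from the primal definition of $\rho$, which is cleaner than the paper's appeal to Lemma~\ref{thm:rep} (indeed, as you observe, $Z^\beta_T$ need not lie in $\mathcal Z$, so Lemma~\ref{thm:rep} does not literally apply; the paper is implicitly falling back on the larger dual set in \eqref{dual_rep}, or on the same Fenchel--Young bound). Your reductions (i)--(ii) parallel the paper's use of the proof of Lemma~\ref{thm:rep} together with its Step~1 (optional stopping of the density martingale), so up to that point the two arguments are essentially the same.

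The divergence is in how one passes from a density $Z$ with values in a compact subinterval of $\mathcal O$ to an approximating $Z^{\gamma^k}_T$ with $\gamma^k\in\mathcal L_b$. The paper proceeds constructively: it first conditions $Z$ on the dyadic $\sigma$-algebras $\mathcal G_n=\sigma(W_{kT2^{-n}}:k\le 2^n)$ to write $Z^n=u_n(W_{T/n},\dots,W_T)$ (Step~2), then mollifies $u_n$ to a smooth function with bounded derivatives (Step~3), and finally computes the conditional expectations $E[Z\mid\mathcal F_t]$ explicitly as Gaussian integrals to read off a bounded $\beta$ via It\^o's formula (Step~4). Your clip-and-stop construction is more abstract and more economical: writing $dM=H\,dW$, you truncate $H$ at level $k$ and stop the resulting martingale upon exiting a slightly larger compact interval $[c'',C'']\subset\mathcal O$; the stochastic logarithm is then bounded by $k/c''$, the approximant stays in $\mathrm{dom}(l^*)$ by construction, and convergence $M^k_T\to Z$ follows from ucp-continuity of the stochastic integral together with the uniform distance of $M$ to $\partial[c'',C'']$. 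This sidesteps the discretization/mollification machinery entirely and makes transparent why the naive truncation $\beta\mathbf 1_{\{|\beta|\le k\}}$ fails in the bounded-domain case (CVaR), as you explain in your final paragraph. The paper's approach, on the other hand, yields explicit approximating $\beta$'s as smooth functions of finitely many Brownian increments, which could be of independent interest.
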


From Proposition \ref{lem PDE} the computation of $\rho(X)$ can be reduced to solving a stochastic optimal control problem of degenerate form.
This connection is made precise in Proposition \ref{prop:struct-V}.(a), in which $Z^{s,z,\beta}$ denotes the solution of the controlled stochastic differential equation (SDE)
	\begin{equation}
	\label{eq:sde_Z}\textstyle
		Z_t = z + \int_s^t\beta_uZ_u\,dW_u\quad t\ge s, \quad \text{for  } \beta \in {\cal L}_b.
	\end{equation}
Observe that the stochastic control problem therein is set in an enlarged state space (of $Y$'s and $Z$'s, where the former is actually uncontrolled). The usefulness of reducing the optimization problem in Proposition \ref{lem PDE} to $\beta \in \mathcal{L}_b$ is that it will allow us to approximate the forthcoming stochastic control problem by simpler ones (namely with compact control constraints), for which a stronger theory of viscosity solutions is available. 

\begin{proposition}
\label{prop:struct-V}
	Assume that (A1) holds and let $f:\mathbb{R}^m\to \mathbb{R}$ and $g:[0,T]\times\mathbb{R}^m\to \mathbb{R}$ be two bounded measurable functions and put $X:= f(Y_T^{0,y}) + \int_0^Tg(t,Y_t^{0,y})\,dt$, $y\in\mathbb{R}^m$.
	Then: 
	\begin{itemize}
	\item[(a)] { $\rho(X)$ is the value of a stochastic optimal control problem with state processes $(Y,Z)$:
	\begin{equation}\textstyle\label{SOC}\rho(X)\,\,=\,\,\sup\limits_{\beta \in \mathcal{L}_b} E\left[ f(Y^{0,y}_T)Z_T^{0,1,\beta} - {l}^*\left ( Z_T^{0,1,\beta}\right ) +\int_0^T g(t,Y^{0,y}_t)Z_t^{0,1,\beta}dt \right] .
\end{equation} 
	In particular $\rho(X) = V(0,y,1)$ where $V$ is the value function of \eqref{SOC}, namely
	\begin{equation}\textstyle
	\label{eq:controlproblem}
 		V(s,y,z):=\sup\limits_{\beta \in \mathcal{L}_b} E\left[ f(Y^{s,y}_T)Z_T^{s,z,\beta} - {l}^*\left ( Z_T^{s,z,\beta}\right ) +\int_s^T g(t,Y^{s,y}_t)Z_t^{s,z,\beta}dt \right],
 	\end{equation} 
 	 for all $(s,y,z)\in [0,T]\times \mathbb{R}^m\times {\cal O}$. }
	\item[(b)] $V$ is concave in $z$ and satisfies the equivalent representations
	\begin{align}\label{eq: rep dyn 1}
 	 V(s,y,z)&\textstyle = \inf\limits_{r\in\mathbb{R}}\left\{ E\left [l\left (  f(Y^{s,y}_T) +\int_s^T g(t,Y^{s,y}_t)dt-r \right )\right ]+rz  \right \} \\
 	 &\textstyle = \rho^{l_z}\left( z \left [f(Y^{s,y}_T)  +\int_s^T g(t,Y^{s,y}_t)dt\right ]\right ),\label{eq: rep dyn 2}
\end{align}	
where $\rho^{l_z}$ is the OCE corresponding to the (not normalized) loss function $ l_z(x) := l(x/z)$.
	\item[(c)] $V$ is continuous on $[0,T]\times\mathbb{R}^m\times {\cal O}$.	
	\end{itemize}

\end{proposition}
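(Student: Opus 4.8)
\textbf{Proof plan for Proposition \ref{prop:struct-V}.}

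The plan is to establish the three assertions in order, leveraging Proposition \ref{lem PDE} for part (a) and the dual/primal interplay of the OCE for parts (b) and (c).

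For part (a), I would start from Proposition \ref{lem PDE}, which already gives $\rho(X) = \sup_{\beta\in{\cal L}_b} E[XZ^\beta_T - l^*(Z^\beta_T)]$. The key observation is that $Z^\beta_T$ is exactly $Z^{0,1,\beta}_T$ in the notation of the controlled SDE \eqref{eq:sde_Z}, since the Dol\'eans-Dade exponential $\exp(\int_0^s\beta_t dW_t - \tfrac12\int_0^s|\beta_t|^2 dt)$ solves $Z_t = 1 + \int_0^t\beta_u Z_u\,dW_u$. Plugging in $X = f(Y^{0,y}_T)+\int_0^T g(t,Y^{0,y}_t)\,dt$ and using Fubini (justified since $g$ is bounded and $E[\int_0^T Z^\beta_t\,dt]=T$ by the martingale property of $Z^\beta$) to pull the time-integral inside the expectation gives \eqref{SOC}. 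The identification $\rho(X)=V(0,y,1)$ is then immediate from the definition \eqref{eq:controlproblem} of $V$; one only needs to note that the control set ${\cal L}_b$ and the cost functional are unchanged by the initialization at $(s,y,z)=(0,y,1)$.

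For part (b), the idea is to run the argument of Proposition \ref{lem PDE} (equivalently Lemma \ref{thm:rep}) in reverse at a general starting point. Fix $(s,y,z)$. Since $z\in{\cal O}$, the loss function $l_z(x):=l(x/z)$ is again convex and increasing (though not normalized), and its conjugate is $l_z^*(\zeta)=l^*(z\zeta)$ so that $\mathrm{dom}(l_z^*)={\cal O}/z$. Applying the dual representation of $\rho^{l_z}$ (the analogue of Lemma \ref{thm:rep}, which does not use the normalization (N)) to the claim $z\,[f(Y^{s,y}_T)+\int_s^T g(t,Y^{s,y}_t)\,dt]$ and changing variables in the dual element ($Z\mapsto Z/z$) recovers exactly the supremum in \eqref{eq:controlproblem} restricted to densities of the form $Z^{s,z,\beta}_T$; Proposition \ref{lem PDE} tells us that restricting to bounded stochastic logarithms does not change the value, so \eqref{eq: rep dyn 1} and \eqref{eq: rep dyn 2} follow. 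Concavity of $V$ in $z$ is then read off directly from \eqref{eq: rep dyn 1}: for fixed $r$, the map $z\mapsto E[l(\cdots)]+rz$ is affine in $z$, and an infimum of affine functions is concave. (Alternatively, concavity follows from \eqref{eq:controlproblem} by a linearity-in-$z$ argument for each fixed $\beta$, since $Z^{s,z,\beta}_T = z\,Z^{s,1,\beta}_T$ and $z\mapsto f(Y_T)z - l^*(z)+\int g\,z$ is concave.)

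For part (c), continuity, the plan is to use whichever representation is most convenient and to split the argument into continuity in $(s,y)$ and continuity in $z$. Continuity in $z$ on ${\cal O}$ is the easiest: a finite concave function on an open interval is automatically continuous, so part (b) gives it for free. For continuity in $(s,y)$ I would work from \eqref{eq:controlproblem} directly: using (A1) and standard $L^2$ (indeed $L^p$ for all $p$) stability estimates for the SDE defining $Y^{s,y}$ — namely $E[\sup_{t}|Y^{s,y}_t - Y^{s',y'}_t|^2]\le C(|y-y'|^2+|s-s'|)$ — together with the Lipschitz and boundedness hypotheses (A2) on $f,g$ and the bound $E[(Z^{s,z,\beta}_T)^2]\le e^{T\|\beta\|_\infty^2}z^2$, one estimates the difference of the cost functionals uniformly over $\beta$ in a fixed ball $\{\|\beta\|_\infty\le N\}$; passing the supremum through this uniform estimate and then letting $N\to\infty$ (after checking that the value is unchanged by such truncation, or by a diagonal argument) yields joint continuity. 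The main obstacle here is the interplay between the unboundedness of the control set ${\cal L}_b$ and the unboundedness of $l^*$: one cannot a priori fix a compact control constraint, and the term $-E[l^*(Z^{s,z,\beta}_T)]$ is only finite, not uniformly controlled. I expect this to be handled by the coercivity built into condition (C) (which forces $\mathrm{dom}(l^*)$ to have nonempty interior and makes near-optimal $\beta$'s effectively bounded in an appropriate averaged sense), so that the supremum in \eqref{eq:controlproblem} may be restricted to a set of controls over which the estimates are uniform; making this reduction rigorous is the delicate point of part (c), and likely the place where the proof in the appendix does most of its work.
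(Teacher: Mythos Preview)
Your arguments for parts (a) and (b) are essentially the same as the paper's: Proposition~\ref{lem PDE} plus the martingale property of $Z^\beta$ (or equivalently It\^o's formula) for (a), and the scaling $Z^{s,z,\beta}=zZ^{s,1,\beta}$ together with $l_z^*(\zeta)=l^*(z\zeta)$ for (b), with concavity read off from the infimum-of-affine form of \eqref{eq: rep dyn 1}.

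The divergence is in part (c). You work from the \emph{dual} representation \eqref{eq:controlproblem} and try to get uniform-in-$\beta$ continuity estimates, correctly anticipating that the unbounded control set and the possibly infinite term $l^*(Z^{s,z,\beta}_T)$ make this delicate. The paper instead works entirely from the \emph{primal} representation \eqref{eq: rep dyn 1}, where the optimization is over a single scalar $r$. The argument is then short: condition (C) forces near-optimal $r$'s to lie in a compact interval (locally uniformly in $z$), which gives lower semicontinuity via dominated convergence and the a.s.\ continuity of $(s,y)\mapsto Y^{s,y}$; upper semicontinuity is automatic since \eqref{eq: rep dyn 1} exhibits $V$ as an infimum of continuous functions of $(s,y,z,r)$. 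This sidesteps exactly the obstacle you flag, and in particular no truncation-plus-Dini step (which would be circular, needing continuity of $V$ to conclude continuity of $V$) is required. Note also that your plan invokes the Lipschitz part of (A2), but the proposition only assumes $f,g$ bounded measurable; the primal route avoids any Lipschitz estimate on the claim. Finally, the proof is not in the appendix but directly after the statement.
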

\begin{proof}$ $\\
$(a)$ The identity \eqref{SOC} follows from Proposition \ref{lem PDE} and It\^o's formula. We think of \eqref{SOC} as an optimal control of the diffusion processes $Y$ and $Z$. The function $V$ in \eqref{eq:controlproblem} is then naturally its value function.\\
$(b)$
From \eqref{eq:controlproblem}, upon writing $Z^{s,z,\beta}=zZ^{s,1,\beta}$ and using the definition of OCEs (notice that $l_z^*(x)=l^*(zx)$), we obtain the representations \eqref{eq: rep dyn 1}-\eqref{eq: rep dyn 2}. From \eqref{eq: rep dyn 1}, $V$ is clearly concave in $z$.
\\
$(c)$ In order to prove continuity of $V$, notice that the infimum in \eqref{eq: rep dyn 1} can be restricted to a compact interval.
In fact, let $\|X\|_{\infty}\leq C$, so clearly $\rho^{l_z}(X)\leq Cz-l^*(z)\leq Cz$. For a fixed $z\in {\cal O}$ we get that any $1$-optimizer $r$ for $\rho^{l_z}(X)$ must satisfy 
$$
l(-C-r) +rz\leq E[l(X-r)]+rz \leq \rho^{l_z}(X) +1\leq Cz +1 ,
$$
and so for any $p\in \mbox{dom}(l^*) $ we have
$$-l^*(p)-Cp+r(z-p)\leq Cz+1.$$
Choosing either $p>z$ or $p<z$ shows that $r$ must a priori lie in a compact interval which only depends on $C$ and $z$. 

Now we prove the continuity claim. Take $(s_n,y_n,z_n)$ converging to $(s,y,z)$, all of them in $[0,T]\times\mathbb{R}^m\times {\cal O}$. Since $f,g$ are bounded and $z_n$ is converging in ${\cal O}$, the previous argument shows that the infima in \eqref{eq: rep dyn 1} for $V(s_n,y_n,z_n)$ can be computed for $r$ in a compact interval independent of $n$. From this we get $\liminf V(s_n,y_n,z_n)\geq V(s,y,z)$, by the a.s.\ continuity of $(s,y)\mapsto Y_T^{s,y}$, dominated convergence and the continuity of $l$, which together imply that $(s,y,z,r)\mapsto r+ E\left[l\left(f(Y^{s,y}_T)+\int_s^T g(t,Y_t^{s,y})dt-r/z\right)\right ]$ is continuous. But from \eqref{eq: rep dyn 1}, we also get that $V$ is upper semicontinuous, as an infimum of continuous functions. This finishes the proof.
\end{proof}
{
\begin{remark}
\label{rem growth}
By \eqref{eq: rep dyn 1} we get $V(s,y,z)\leq E[l(f(Y_T^{s,y}) +\int_s^T g(t,Y_t^{s,y})dt) ]$, so under the boundedness assumptions on $f,g$ and continuity of $l$ we see that $V$ is bounded from above. We also obtain $V(s,y,z)\geq zC-l^*(z)$, for instance by \eqref{eq:controlproblem}. Thus the growth of $V$ is only interesting in the $z$-component, and is fully captured by $l^*$. Hence observe that if $l^*$ is finite, then $l^*$ (equivalently $V$) has polynomial growth if and only if $l$ grows at least polynomially. On the other hand, if ${dom(l^*)}$ is bounded, then $V$ has polynomial growth immediately.  
Thus the assumptions for minimality in Theorem \ref{thm:exist} apply to $V$ under the given conditions.

\end{remark}
}
\begin{remark}
\label{rem:roughness of V}
Taking the process $Y$ to be constant (i.e.\ $b,\sigma\equiv 0$), we find $V(s,y,z)=\big(f(y)+\int_s^Tg(t,y)dt\big)z-l^*(z)$. Of course, if either of $l^*$, $f$ or $g$ is rough, then so will be $V$. This extends the phenomenon in Example \ref{ex preliminary} to the value function as a whole. On the other hand, if $l^*$ is twice differentiable, then it is elementary to show that this $V$ is a classical solution to the corresponding \eqref{eq:hjb}, despite the potential roughness in $y$, since there are no $y$-derivatives involved there.
\end{remark}

As proven by \citet{kupper02}, most law-invariant risk measures (of which OCE form a subfamily) are not time consistent in the sense described in the introduction. 
Equations \eqref{eq: rep dyn 1}-\eqref{eq: rep dyn 2} above 
can be seen as substitutes for time-consistency. They rely on the idea of enlarging the state space. 
This idea is further developed in Proposition \ref{prop: Bellman gnral} below,
and culminates in the dynamic programming principle of Corollary \ref{cor:bellman} thereafter.
This is how we induce time-consistency into the problem.
Note however that one needs to keep track of the state $Z$, and that both the claim and the loss function in  \eqref{eq: rep dyn 2} need to be scaled properly. This is most apparent for conditional value-at-risk, as first noted by \citet{PflugPichlerinconsistency1} in discrete-time; see Section \ref{sec CVAR} below for the explicit expression in the present continuous-time setting. We would like to stress that it is the present stochastic control perspective, based on the dual representation of OCEs which permits to unearth the pleasant dynamic properties we have referred to (namely a dynamic programming principle and eventually a PDE characterization); this seems to be a strong advantage of the method as opposed to a purely primal perspective.

Let $X\in L^\infty(\mathcal F_T)$ and define 
\begin{equation}\label{def OCE cond}
(s,\eta)\in [0,T]\times L^0(\mathcal F_s) \mapsto v(X,s,\eta): = \essinf_{r\in \mathbb R}\left\{ E[l(X-r)|\mathcal F_s] +r\eta \right \},
\end{equation}
and observe that $\rho(X)=v(X,0,1)$. We have the following Bellman-type principle which actually holds in general filtrations. {We stress that this is more or less easily obtained because of the (primal) structure of OCEs; indeed, since \eqref{def OCE cond} is just a scalar minimization problem, there is no need for deep measurable selection arguments (as opposed to e.g.\ the situation in stochastic control theory).}

\begin{proposition}\label{prop: Bellman gnral}
For  $0\leq s\leq t\leq T$ and $\eta\in L^0(\mathcal F_s)\cap \text{dom}(l^*) $ we have
\begin{align}\textstyle
	v(X,s,\eta)&=\, 	\esssup\limits_{\substack{Z \in L^1_+({\cal F}_t)\\ \eta Z\in \text{dom}(l^*),E[ Z|\mathcal F_s]=1}} E\left[  \essinf_{r\in\mathbb{R}}\bigl\{ E[l (X-r)|{\cal F}_t] +r\eta Z \bigr \}\,\,| \mathcal F_s \right]\label{dual_rep_recursive gnral} \\
	 &=\, \esssup\limits_{\substack{Z \in L^1_+({\cal F}_t)\\ \eta Z\in \text{dom}(l^*),E[ Z|\mathcal F_s]=1}} E\left[ \,\, v(X,t,\eta Z)\,\,| \mathcal F_s \right]  \notag .
\end{align}
In particular, 
\begin{equation}\textstyle
	\rho(X)\,=\, 	\sup\limits_{\substack{Z \in L^1_+({\cal F}_t)\\ Z\in \text{dom}(l^*),E[ Z]=1}} E\left[  \essinf_{r\in\mathbb{R}}\bigl( E[l (X-r)|{\cal F}_t] +rZ \bigr ) \right].\label{dual_rep_recursive}
\end{equation}
\end{proposition}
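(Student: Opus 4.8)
The plan is to obtain the recursive formula \eqref{dual_rep_recursive gnral} directly from the dual representation of the conditional OCE, keeping the minimization variable $r$ explicit and juggling two nested optimizations (over $r$ and over the density $Z$) by a careful interchange argument. First I would fix $0\le s\le t\le T$ and $\eta\in L^0(\mathcal F_s)\cap\text{dom}(l^*)$, and recall that, by exactly the same Fenchel-duality computation that proves Lemma \ref{thm:rep} but performed conditionally on $\mathcal F_t$, one has the ``inner'' identity
\begin{equation*}
\essinf_{r\in\mathbb R}\bigl\{E[l(X-r)\mid\mathcal F_t]+r\eta Z\bigr\}
=\esssup_{\substack{\zeta\in L^1_+(\mathcal F_T),\ E[\zeta\mid\mathcal F_t]=\eta Z\\ \zeta\in\text{dom}(l^*)}}E[X\zeta-l^*(\zeta)\mid\mathcal F_t],
\end{equation*}
valid a.s.\ on the event $\{\eta Z\in\text{dom}(l^*)\}$ (and with the convention that the inner expression is the value of $\rho^{l_{\eta Z}}$ applied conditionally, scaled as in Proposition \ref{prop:struct-V}(b)); the attainment and finiteness come from condition (C) and the compact-interval argument already used in the proof of Proposition \ref{prop:struct-V}(c). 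The second equality in \eqref{dual_rep_recursive gnral} is then just the definition \eqref{def OCE cond} of $v(X,t,\eta Z)$ with $s$ replaced by $t$, so the whole content is the first equality.

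For the first equality I would argue by two inequalities. For ``$\le$'': start from the global dual representation $v(X,s,\eta)=\esssup_{\xi}E[X\xi-l^*(\xi)\mid\mathcal F_s]$ over $\xi\in L^1_+(\mathcal F_T)$ with $E[\xi\mid\mathcal F_s]=\eta$ and $\xi\in\text{dom}(l^*)$ (again the conditional analogue of Lemma \ref{thm:rep}). Given such a $\xi$, set $Z:=E[\xi\mid\mathcal F_t]/\eta$, which is an admissible competitor in \eqref{dual_rep_recursive gnral} (it is in $L^1_+(\mathcal F_t)$, satisfies $E[Z\mid\mathcal F_s]=1$, and $\eta Z=E[\xi\mid\mathcal F_t]\in\text{dom}(l^*)$ by conditional Jensen applied to the convex function $l^*$); then tower property and the inner identity above give $E[X\xi-l^*(\xi)\mid\mathcal F_s]\le E\bigl[\,\essinf_r\{E[l(X-r)\mid\mathcal F_t]+r\eta Z\}\mid\mathcal F_s\bigr]$, and taking the supremum over $\xi$ yields ``$\le$''. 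For ``$\ge$'': fix an admissible $Z$ for \eqref{dual_rep_recursive gnral}; by a measurable-selection / approximation argument pick, for each $\varepsilon>0$, an $\varepsilon$-optimal $\zeta$ in the inner problem, i.e.\ $\zeta\in L^1_+(\mathcal F_T)$ with $E[\zeta\mid\mathcal F_t]=\eta Z$, $\zeta\in\text{dom}(l^*)$, and $E[X\zeta-l^*(\zeta)\mid\mathcal F_t]\ge\essinf_r\{\cdots\}-\varepsilon$; such a selection exists because the inner problem is, after the substitution of Proposition \ref{prop:struct-V}(b), a scalar minimization whose $\varepsilon$-minimizers can be chosen measurably and the compactness of the relevant $r$-interval gives the needed integrability. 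Then $\zeta$ is admissible for the global problem defining $v(X,s,\eta)$ (its conditional expectation given $\mathcal F_s$ is $E[\eta Z\mid\mathcal F_s]=\eta$), so $v(X,s,\eta)\ge E[X\zeta-l^*(\zeta)\mid\mathcal F_s]\ge E\bigl[\,\essinf_r\{\cdots\}\mid\mathcal F_s\bigr]-\varepsilon$; letting $\varepsilon\downarrow0$ and then taking the esssup over $Z$ gives ``$\ge$''.

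Finally, \eqref{dual_rep_recursive} is the special case $s=0$, $\eta=1$: then $v(X,0,1)=\rho(X)$ and $L^0(\mathcal F_0)\cap\text{dom}(l^*)$ certainly contains $1$, the admissibility constraint becomes $Z\in L^1_+(\mathcal F_t)$, $Z\in\text{dom}(l^*)$, $E[Z]=1$, and the outer conditional expectation given $\mathcal F_0$ is an ordinary expectation, so \eqref{dual_rep_recursive gnral} specializes verbatim to \eqref{dual_rep_recursive}. The main obstacle I anticipate is the ``$\ge$'' direction: making the $\varepsilon$-optimal selection $\zeta$ rigorous (measurability and, more delicately, $L^1$-integrability of the selected density, which has to be controlled uniformly enough to pass to the conditional expectation) is the one genuinely technical point; everything else is the conditional version of the static Fenchel duality already invoked for Lemma \ref{thm:rep}, together with the tower property and conditional Jensen. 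It is plausible the authors avoid the selection issue by instead exploiting the \emph{primal} scalar structure of \eqref{def OCE cond} directly — optimizing over the single real parameter $r$ rather than over densities — which is presumably why the statement is phrased with $\essinf_{r\in\mathbb R}$ kept inside; in that route one would show the two sides agree by comparing, for each fixed $r$, the quantity $E[l(X-r)\mid\mathcal F_s]+r\eta$ with $E[\,\essinf_{r'}\{E[l(X-r')\mid\mathcal F_t]+r'\eta Z\}\mid\mathcal F_s]$ and then optimizing, which sidesteps measurable selection of densities entirely.
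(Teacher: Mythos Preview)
Your plan is essentially correct, but you have inverted which inequality is elementary and which requires work, and this leads you to a more laborious route than the paper takes. In the paper, the inequality $v(X,s,\eta)\ge\text{r.h.s.}$ (your ``$\ge$'' direction) is a one-liner: interchange $\esssup_Z$ and $\essinf_r$ (max-min $\le$ min-max), then use the tower property and $E[Z\mid\mathcal F_s]=1$ to collapse the inner expectation. No density selection is needed at all. The work goes into the opposite inequality $v(X,s,\eta)\le\text{r.h.s.}$, and here the paper, like you, starts from the conditional dual representation \eqref{dual_rep conditional}. But rather than invoking your ``inner identity'' to pass directly from $\xi$ to $\essinf_r$, the paper applies Fenchel--Young with an auxiliary $\mathcal F_t$-measurable variable $R^{E[Z\mid\mathcal F_t]}\in L^\infty(\mathcal F_t)$, reducing the problem to an $\varepsilon$-selection of $R\in L^\infty(\mathcal F_t)$ (primal, scalar-valued) rather than of a density $\zeta$. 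This selection is handled by the directed-downwards property of the family $\{E[l(X-R)\mid\mathcal F_t]+R\eta Z\}$ together with the compactness argument from the proof of Proposition~\ref{prop:struct-V}(c), so no measurable-selection theorem for densities is ever invoked. Your closing paragraph anticipates exactly this move.

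Your dual-only approach is not wrong, and your ``$\le$'' argument (factoring $\xi$ through $\mathcal F_t$ via $Z:=E[\xi\mid\mathcal F_t]/\eta$) is in some ways cleaner than the paper's Fenchel--Young step---provided you have already established the full inner identity, which is itself the non-trivial direction of the conditional dual representation. One small technical point: the division by $\eta$ fails on $\{\eta=0\}$, which can have positive probability (e.g.\ for CVaR, $0\in\text{dom}(l^*)$); this is harmless since on that event both sides of \eqref{dual_rep_recursive gnral} equal $-l^*(0)$, but it should be mentioned. The main cost of your route is that your ``$\ge$'' direction, which ought to be trivial, becomes entangled with the genuinely delicate $\varepsilon$-selection of $\zeta\in L^1_+(\mathcal F_T)$ that you yourself flag as the main obstacle. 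The paper sidesteps this entirely by keeping the primal scalar $r$ (or $R$) as the selection variable throughout.
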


\begin{proof}
It is elementary that the r.h.s.\ of \eqref{dual_rep_recursive gnral} is almost surely bounded from above by
$$\textstyle \essinf_{r\in\mathbb R}\esssup\limits_{\substack{Z \in L^1_+({\cal F}_t)\\ \eta Z\in \text{dom}(l^*),E[ Z|\mathcal F_s]=1}}  E\left[  \,E[l (X-r)|{\cal F}_t] +r\eta Z \,\,| \mathcal F_s \right],$$
which is equal to $v(X,s,\eta)$ by the tower property, the measurability of $\eta$ and the fact that $E[ Z|\mathcal F_s]=1$. So we only need to establish the opposite inequality. 

We first notice that by definition of $l^*$ and a measurable selection argument, $v(X,s,\eta)$ has the convex dual representation
\begin{equation}
	v(X,s,\eta)\,=\, 	\esssup\limits_{Z\in{\cal Z}_s }E\left[X\eta Z-l^*( \eta Z)\, |\mathcal F_s  \right ],\quad X \in L^\infty(\mathcal F_T),\label{dual_rep conditional}
\end{equation}
where $\mathcal Z_s=\{Z\in L^1_+(\mathcal F_T): E[Z|\mathcal F_s]=1\}$. This is just the robust representation of a conditional risk measure which, while being similar to a conditional OCE, is only translation invariant by a factor of $\eta$. By arguing as in the proof of Lemma \ref{thm:rep} we may assume that, for the $Z$ over which the supremum in \eqref{dual_rep conditional} is computed, it holds that the essential range\footnote{The essential range of $\eta Z$ is $range(\eta Z):=[\essinf \eta Z,\esssup \eta Z]$.} of $\eta Z$ is contained in $\text{int(dom}(l^*)\text{)}$.  Let $\{R^{\tilde{Z}}\}_{\tilde{Z}\in L^1_{+}({\cal F}_t)}\subset L^{\infty}({\cal F}_t)$. 
From \eqref{dual_rep conditional}, the observation made and Fenchel-Young's inequality we see
\begin{align}
v(X,s,\eta)& \textstyle =  	\esssup\limits_{ \substack{Z\in L^1_+({\cal F}_T),E[ Z|\mathcal F_s]=1\\ range(\eta Z)\subset \text{int(dom}(l^*)\text{)}}}E\left[(X-R^{E[Z|{\cal F}_t]})\eta Z -l^*(\eta Z) + R^{E[Z|{\cal F}_t]}\eta Z \, |\mathcal F_s  \right]\notag\\
&\textstyle \leq 	\esssup\limits_{\substack{Z\in  L^1_+({\cal F}_T),E[ Z|\mathcal F_s]=1\\range(\eta Z)\subset \text{int(dom}(l^*)\text{)}\\ }} E\left[ E[l(X-R^{E[Z|{\cal F}_t]})|{\cal F}_t] + R^{E[Z|{\cal F}_t]}\eta {E[Z|{\cal F}_t]}  \, |\mathcal F_s \right]\notag \\
&\textstyle \leq 	\esssup\limits_{\substack{Z\in  L^1_+({\cal F}_t),E[ Z|\mathcal F_s]=1\\range(\eta Z)\subset \text{int(dom}(l^*)\text{)}\\ }} E\left[ E[l(X-R^Z)|{\cal F}_t] + R^Z\eta Z  \, |\mathcal F_s \right].\label{eq upper essinf gnral}
\end{align}
The result will follow by choosing (for each $Z\in L^{1}_{+}({\cal F}_t)$ with $E[ Z|\mathcal F_s]=1$ and $range(\eta Z)\subset \text{int(dom}(l^*)\text{)}$) the functions $R^Z$ wisely. Let us define 
$$\textstyle I(Z)=\essinf_{R \in  L^{\infty}({\cal F}_t)}\{E[l(X-R)|{\cal F}_t]+ R\eta Z\} > - \infty,$$
where the inequality follows from $I(Z)\geq \inf_r\{l(-\|X\|_{\infty}-r)+r\eta Z\}=-\eta Z \|X\|_{\infty} -l^*(\eta Z)$. Observe that the family $\{E[l(X-R)\mid{\cal F}_t ]+ R\eta Z\,:\,\, R \in L^\infty({\cal F}_t) \}$ is directed downwards. Thus there is a feasible sequence $\{R_n\}$ such that $I(Z)$ is the decreasing limit of $E[l(X-R_n)|{\cal F}_t]+ R_n\eta Z$; this follows from \citet[Appendix A.5]{FS3dr}.
Furthermore, arguing as{} in the proof of Proposition \ref{prop:struct-V}.(c) we may assume w.l{}.o.g.\ that $\{R_n\}$ is uniformly essentially bounded. It is then elementary to construct from this, for any $\varepsilon>0$, an ${R}_{\varepsilon}=R_\varepsilon(Z)\in L^{\infty}({\cal F}_t) $ such that $I(Z)\geq -\varepsilon + E[l(X-R_{\varepsilon})|{\cal F}_t]+ R_{\varepsilon}\eta Z$. Taking $R^Z=R_{\varepsilon}$ in \eqref{eq upper essinf gnral} gives
$$\textstyle v(X,s,\eta)-\varepsilon\leq \esssup\limits_{\substack{Z\in  L^1_+({\cal F}_t)\\ \eta Z\in \text{dom}(l^*) \\ E[ Z|\mathcal F_s]=1}} E[I(Z)|\mathcal F_s]\leq \esssup\limits_{\substack{Z\in  L^1_+({\cal F}_t)\\ \eta Z\in \text{dom}(l^*) \\ E[ Z|\mathcal F_s]=1} }E\left[ \essinf_{r\in\mathbb{R}}\left ( E[l(X-r)|{\cal F}_t]+ r\eta Z  \right )  |\mathcal F_s\right], $$
as $\mathbb{R}\subset L^\infty({\cal F}_t)$. We conclude taking $\varepsilon\to 0$.
\end{proof}

The previous Bellman-type principle (for $v$ as in \eqref{dual_rep conditional}) becomes more familiar in the Markovian setting (for $V$) we have mostly discussed so far, to wit:

\begin{corollary}
\label{cor:bellman}
	Assume that (A1)-(A2) hold.
	Then Bellman's dynamic programming principle is satisfied, that is, for every $0\le s\le T$ and $\theta$ a stopping time with values in $[s,T]$ we have
	\begin{equation}\textstyle
	\label{eq:Bellman}
 		V(s,y,z)=\sup\limits_{\substack{\beta \in \mathcal{L}_b}} E\left[  \int_s^{\theta} g(t,Y^{s,y}_t)Z_t^{s,z,\beta}dt + V\left(\theta, Y_{\theta}^{s,y},Z_{\theta}^{s,z,\beta} \right ) \right], \,\, \mbox{ }y\in {\mathbb R^m},z\in \text{dom}(l^*).
	\end{equation}
\end{corollary}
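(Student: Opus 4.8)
The plan is to deduce the Markovian dynamic programming principle \eqref{eq:Bellman} from the general Bellman-type principle of Proposition \ref{prop: Bellman gnral}, specialized to the filtration of $W$ and to the claim $X^{s,y} := f(Y_T^{s,y}) + \int_s^T g(t,Y_t^{s,y})\,dt$. The key conceptual point is the dictionary between the abstract objects appearing in \eqref{dual_rep_recursive gnral} and the controlled diffusion $Z^{s,z,\beta}$ of \eqref{eq:sde_Z}. First I would fix $0\le s\le t\le T$ deterministic and recall from Proposition \ref{prop:struct-V}(b) that $V(s,y,z)=v(X^{s,y},s,z)$ (in the notation of \eqref{def OCE cond}); this reduces \eqref{eq:Bellman} with $\theta\equiv t$ to showing that the supremum over $Z\in L^1_+(\mathcal F_t)$ with $E[Z\mid\mathcal F_s]=1$ and $zZ\in\text{dom}(l^*)$ in Proposition \ref{prop: Bellman gnral} can be identified with the supremum over $\beta\in\mathcal L_b$ of $E[\int_s^t g(u,Y_u^{s,y})Z_u^{s,z,\beta}\,du + V(t,Y_t^{s,y},zZ_t^{s,1,\beta})]$, using $Z_u^{s,z,\beta}=zZ_u^{s,1,\beta}$ and the Markov property of $(Y,Z)$ which makes $v(X^{s,y},t,\cdot)$ at the random point $(Y_t^{s,y},\cdot)$ coincide with $V(t,Y_t^{s,y},\cdot)$.

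The core of the argument is therefore a density/parametrization lemma: on the Brownian filtration, the set of terminal values $Z_t^{s,1,\beta}$, $\beta\in\mathcal L_b$, is "rich enough" to realize (up to the relevant suprema) every $\mathcal F_t$-measurable density $Z\ge 0$ with $E[Z\mid\mathcal F_s]=1$ and essential range of $zZ$ inside $\text{int(dom}(l^*)\text{)}$. One inclusion is immediate — each $Z_t^{s,1,\beta}$ is such a density once we check $zZ_t^{s,1,\beta}$ stays in the interior of $\text{dom}(l^*)$, which holds because $\beta$ is essentially bounded so $Z^{s,1,\beta}$ is bounded away from $0$ and $\infty$ on $[s,t]$, and because $z\in{\cal O}$. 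For the reverse approximation I would reuse verbatim the truncation/mollification scheme already invoked in the proof of Lemma \ref{thm:rep} and in Proposition \ref{prop: Bellman gnral} (where densities are replaced by ones with essential range in the interior of $\text{dom}(l^*)$, bounded above and below), and then apply the martingale representation theorem to write such a strictly-positive bounded density as a stochastic exponential $Z_t^{s,1,\beta}$ with $\beta$ bounded — exactly the reduction carried out in Proposition \ref{lem PDE}. Continuity of $V$ (Proposition \ref{prop:struct-V}(c)) together with dominated convergence, using the uniform bounds on $f,g$ and the bound $V\le E[l(\cdots)]$ from Remark \ref{rem growth}, lets the approximation pass to the limit inside the expectations.

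Once \eqref{eq:Bellman} is established for deterministic times $t$, the extension to a stopping time $\theta$ with values in $[s,T]$ is routine and follows one of the standard routes (see \citet{Flem-Soner-second,YongZhou}): either approximate $\theta$ from above by discrete-valued stopping times $\theta_n\downarrow\theta$, apply the deterministic-time identity on each level set together with the tower property and the Markov property of $(Y,Z)$, and pass to the limit using continuity of $V$ and of the paths of $(Y,Z)$; or invoke the flow/semigroup property $V(s,y,z)=\sup_\beta E[\int_s^\theta g Z\,dt + V(\theta,Y_\theta,Z_\theta)]$ directly from the conditional representation \eqref{eq: rep dyn 1}. I expect the main obstacle to be the first, "density realization" step: one must be careful that restricting the outer supremum in Proposition \ref{prop: Bellman gnral} to densities of the form $Z_t^{s,1,\beta}$ with $\beta\in\mathcal L_b$ does not lose value, i.e.\ that the truncations keeping $zZ$ inside $\text{int(dom}(l^*)\text{)}$ are compatible with being a stochastic exponential with bounded logarithm — this is where the hypotheses that ${\cal O}\ne\emptyset$ (guaranteed by (C)) and that we are in a Brownian filtration are genuinely used, and it is essentially the content already proved in Proposition \ref{lem PDE}, so the remaining work is to combine these ingredients with the conditional version rather than to prove anything substantially new.
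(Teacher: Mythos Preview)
Your plan correctly identifies Proposition~\ref{prop: Bellman gnral} as the engine, and the reduction to stochastic exponentials via the arguments of Proposition~\ref{lem PDE} is indeed what the paper invokes. But there is a concrete error in your ``easy inclusion'': you assert that for $\beta\in\mathcal L_b$ the exponential $Z^{s,1,\beta}$ is bounded away from $0$ and $\infty$ on $[s,t]$, so that $zZ_t^{s,1,\beta}$ automatically lies in $\mathcal O$. This is false --- already for constant $\beta\neq 0$ the law of $Z_t^{s,1,\beta}$ is lognormal with essential range $(0,\infty)$, so in the bounded-domain cases (e.g.\ CVaR) the event $\{zZ_t^{s,1,\beta}\notin\text{dom}(l^*)\}$ has positive probability. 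The fix is not hard: extend $V$ via \eqref{eq: rep dyn 1} to all $z\ge 0$, so that $V(\cdot,\cdot,z)=-\infty$ for $z\notin\text{dom}(l^*)$; then any $\beta$ which sends $Z^{s,z,\beta}_\theta$ outside the domain contributes $-\infty$ to the supremum and may be discarded, while the remaining $\beta$ do produce admissible densities for \eqref{dual_rep_recursive gnral}. As written, however, your argument for that inclusion does not stand.

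Structurally the paper proceeds somewhat differently, and more directly. The inequality $V\le\text{r.h.s.}$ is not routed through Proposition~\ref{prop: Bellman gnral} at all: it is the classical flow-property half of the DPP for the controlled system $(Y,Z)$, quoted from \citet[Chap.~4, Theorem~3.3]{YongZhou}. Only the reverse inequality uses Proposition~\ref{prop: Bellman gnral}, and it is applied \emph{directly at the stopping time $\theta$} (the proof of that proposition goes through verbatim with $\mathcal F_\theta$ in place of $\mathcal F_t$), so your two-step scheme --- deterministic times first, then approximate $\theta$ by discrete-valued stopping times --- is an avoidable detour. The paper starts from \eqref{eq: rep dyn 1}, passes to conditional form via the tower property, invokes Proposition~\ref{prop: Bellman gnral} between $\mathcal F_s$ and $\mathcal F_\theta$, performs the change of variables $r\mapsto r-\int_s^\theta g(t,Y_t^{s,y})\,dt$ to extract the running cost, and then identifies the remaining conditional $\essinf$ with $V(\theta,Y_\theta^{s,y},zZ)$ by the flow property of $Y$. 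This yields \eqref{eq ineq useful DPP}, from which the supremum over $\beta\in\mathcal L_b$ is recovered by the representation of densities in the Brownian filtration (the content of Proposition~\ref{lem PDE}).
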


\begin{proof}
That the l.h.s.\ is smaller than the r.h.s.\ is a classical application of the flow property for the strong (and unique) solution of the system for $(Y,Z)$ when $(s,y,z,\beta)$ are specified. See for example \citet[Chap. 4, Theorem 3.3]{YongZhou} and its proof. For the converse inequality, we shall establish
\begin{align}
\label{eq ineq useful DPP}
V(s,y,z)\geq \textstyle \sup\limits_{\substack{Z \in L^1_+({\cal F}_\theta)\\ zZ\in \text{dom}(l^*),E[ Z|\mathcal F_s]=1}} E\left [ z Z\int_s^{\theta} g(t,Y^{s,y}_t) dt + V\left(\theta, Y_{\theta}^{s,y},zZ \right ) \right ].
\end{align}
This and arguments as in the proof of Proposition \ref{lem PDE} (permitting to reduce to the case of essentially bounded $\beta$ after representing $Z$ in the Brownian filtration) yield the desired result. We start observing, by \eqref{eq: rep dyn 1} and the tower property, that%
\begin{align*}
V(s,y,z)&=  \textstyle\inf_r E\left [\,\, E\left[l\left( f(Y_T^{s,y})+\int_s^T g(t,Y_t^{s,y})dt -r\right)\Bigl | \,\mathcal{F}_s \right]+rz \,\, \right ]\\
&\geq \textstyle E\left [\,\,\essinf_r\left \{ E\left[l\left( f(Y_T^{s,y})+\int_s^T g(t,Y_t^{s,y})dt -r\right)\Bigl | \,\mathcal{F}_s \right]+rz \right\}\,\, \right ].
\end{align*}
Using Proposition \ref{prop: Bellman gnral} applied 
to the claim $\tilde{X}:= f(Y_T^{s,y}) +\int_s^T g(t,Y_t^{s,y})dt$, we get:
\begin{align*}
&V(s,y,z)\geq \textstyle  E\left[\,\, \esssup\limits_{\substack{Z \in L^1_+({\cal F}_\theta)\\ z Z\in \text{dom}(l^*)\\E[ Z|\mathcal F_s]=1}}  E\left [\essinf_r\left \{ E\left[l\left( f(Y_T^{s,y})+\int_s^T g(t,Y_t^{s,y})dt -r\right)\Bigl | \,\mathcal{F}_\theta \right]+rzZ \right\} \, \Bigl |\, \mathcal{F}_s \right ]\,\,\right] \\
=& \textstyle 
 E\left[\,\,  \esssup\limits_{\substack{Z \in L^1_+({\cal F}_\theta)\\ z Z\in \text{dom}(l^*)\\E[ Z|\mathcal F_s]=1}}  E\left [\essinf_r\left \{ E\left[l\left( f(Y_T^{s,y})+\int_\theta^T g(t,Y_t^{s,y})dt -r\right)\Bigl | \,\mathcal{F}_\theta \right]+rzZ \right\} +zZ\int_s^\theta g(t,Y^{s,y}_t)dt \, \Bigl |\, \mathcal{F}_s \right ]\,\,\right],
\end{align*}
by separating the integral and changing variables $r -\int_s^\theta g(t,Y_t)dt\to r$,  which is allowed thanks to the $\mathcal F_\theta$-conditional expectation. We can then further bound from below and use the tower property:
\begin{align*}
&V(s,y,z)\geq  \\ & \textstyle 
 \sup\limits_{\substack{Z \in L^1_+({\cal F}_\theta)\\ z Z\in \text{dom}(l^*)\\E[ Z|\mathcal F_s]=1}}E\left[\,\,    E\left [\essinf_r\left \{ E\left[l\left( f(Y_T^{s,y})+\int_\theta^T g(t,Y_t^{s,y})dt -r\right)\Bigl | \,\mathcal{F}_\theta \right]+rzZ \right\} +zZ\int_s^\theta g(t,Y^{s,y}_t)dt \, \Bigl |\, \mathcal{F}_s \right ]\,\,\right]\\
 =&
 \textstyle 
 \sup\limits_{\substack{Z \in L^1_+({\cal F}_\theta)\\ z Z\in \text{dom}(l^*)\\E[ Z|\mathcal F_s]=1}}E\left[\,\,    \essinf_r\left \{ E\left[l\left( f(Y_T^{s,y})+\int_\theta^T g(t,Y_t^{s,y})dt -r\right)\Bigl | \,\mathcal{F}_\theta \right]+rzZ \right\} +zZ\int_s^\theta g(t,Y^{s,y}_t)dt \,\,\right].
\end{align*}
Observe that by flow property arguments as in  \citet[Chap. 4, Lemma 3.2]{YongZhou}, we have
\begin{align*}  & \textstyle \essinf_r\left \{ E\left[l\left( f(Y_T^{s,y})+\int_\theta^T g(t,Y_t^{s,y})dt -r\right)\Bigl | \,\mathcal{F}_\theta \right]+rzZ \right\}  \\=& \textstyle \inf_r\left \{ E\left[l\left( f(Y_T^{s,y})+\int_\theta^T g(t,Y_t^{s,y})dt -r\right)\Bigl | \,\mathcal{F}_\theta \right]+rzZ \right\} \\
=& \textstyle 
\inf_r\left \{ E\left[l\left( f(Y_T^{\theta,Y_\theta^{s,y}})+\int_\theta^T g(t,Y_t^{\theta,Y_\theta^{s,y}})dt -r\right)\right]+rzZ \right\} \\
=&\textstyle V(\theta,Y_\theta^{s,y} ,zZ),
\end{align*}
where the first equality comes from the fact that its r.h.s.\ is measurable (it suffices to take infimum over the rational numbers), and the equality follows by \eqref{eq: rep dyn 1}. This identity and the previous inequality prove \eqref{eq ineq useful DPP}.
\end{proof}

We conclude this section with the proof of Theorem \ref{thm:exist}. {In light of Proposition \ref{prop:struct-V}, this boils down to proving that the value function $V$ there is the (minimal) viscosity solution of the HJB equation \eqref{eq:hjb}.}

 \begin{proof}(of Theorem \ref{thm:exist})$ $\\
 	STEP 1: \emph{Viscosity subsolution property of $V$.}\\
Let $n\in \mathbb{N}\setminus\{0\}$, put ${\cal L}_b^n:=\{\beta \in {\cal L}_b:  |\beta|\le n\}$ and consider the control problem
	\begin{equation}\textstyle
	\label{eq:valuetruncated}\textstyle
 		V^{n}(s,y,z):=\sup\limits_{\substack{\beta \in \mathcal{L}_b^n}} E\left[ f(Y^{s,y}_T)Z_T^{s,z,\beta} - l^*\left ( Z_T^{s,z,\beta}\right ) +\int_s^T g(t,Y^{s,y}_t)Z_t^{s,z,\beta}dt \right]
 	\end{equation} 
 	with $(s,y,z)\in [0,T]\times \mathbb{R}^m\times {\cal O}$.
It is associated to the HJB equation
\begin{align}
\label{eq:hjbtruncate}
\begin{cases}
	&\partial_tV + H^n(t, y,z, DV, D^2V) = 0,\quad (t,y,z)\in [0,T)\times \mathbb{R}^m\times {\cal O}\\
	&V(T,y,z) = \psi(y,z),\quad (y,z)\in \mathbb{R}^m\times {\cal O}
\end{cases}
\end{align}
where 
	\begin{equation*}
		H^n(t, x, \gamma, \Gamma) := \langle B(t,y,z), \gamma\rangle + z g(t, y) + \frac{1}{2}\sup_{ |\beta|\le n}tr(AA'\Gamma).
	\end{equation*}
An application of the flow property for the
strong (and unique) solution of the system of SDEs for $(Y, Z)$ when $(s,y,z,\beta)$ are fixed, shows that $V^n$ satisfies
\begin{equation}\textstyle
\label{eq:half_DPP_n}
	V^n(s,y,z) \le \sup\limits_{\beta \in {\cal L}^n_b}E\left[\int_s^\theta g(t,Y^{s,y}_t)Z^{s,z,\beta}_t\,dt + V^n\left(\theta, Y^{s,y}_\theta, Z^{s,z,\beta}_\theta\right) \right]
\end{equation}
for all $(s,y,z)\in [0,T]\times \mathbb{R}^m\times {\cal O}$ and $[s,T]$-valued stopping times $\theta$. 
 With this inequality at hand, it is now routine to show that $V^n$ is a viscosity subsolution of \eqref{eq:hjbtruncate}.
We provide the argument for the sake of completeness; the method we use is similar in spirit to \citet[Section 5]{Neu-Nutz17}. It is well-known (see e.g.\ \citet[Chap. 2, Sect. 2.6, Theorem. 6.1]{Flem-Soner-second}) that when testing the subsolution property, we may assume that the test function $\varphi $ is smooth with bounded derivatives and that 
 $V^n -\varphi$ has a global maximum at $x=(s,y,z)\in [0,T]\times \mathbb{R}^m\times {\cal O}$ with $V^n(x) = \varphi(x)$.
If $s = T$, then $\varphi(x) = \psi(x)$.
Assuming $s<T$, then by \eqref{eq:half_DPP_n} we have
\begin{equation*}\textstyle
	0\le \sup\limits_{\beta\in {\cal L}^n_b}E\left[\int_s^{s+u}g(t,Y^{s,y}_t)Z^{s,z,\beta}_t\,dt+\varphi(s+u, Y_{s+u}^{s,y}, Z^{s,z,\beta}_{s+u}) - \varphi(s,y,z) \right]
\end{equation*}
for all $u\in (0,T-s)$.
Applying It\^o's formula to $t\mapsto \varphi(t, Y_{t}^{s,y}, Z^{s,z,\beta}_{t})$ yields
\begin{align*}
	0\,\le &\textstyle \sup_{\beta \in {\cal L}^n_b}\int_s^{s+u}E\Bigl [ g(t,Y^{s,y}_t)Z^{s,z,\beta}_t+  \partial_y\varphi(t, Y^{s,y}_t, Z^{s,z,\beta}_t)b(t,Y^{s,y}_t) + \partial_t\varphi(t, Y_t^{s,y}, Z^{s,z,\beta}_t)\\
	& \textstyle + \frac{1}{2}tr(\partial_{yy}\varphi(t, Y^{s,y}_t, Z^{s,z,\beta}_t)\sigma\sigma'(t,Y^{s,y}_t))+\frac{1}{2}\partial_{zz}\varphi(t, Y^{s,y}_t, Z^{s,z,\beta}_t)|\beta_t|^2(Z_t^{s,z,\beta})^2\\ &\textstyle +\partial_{yz}\varphi(t, Y^{s,y}_t, Z^{s,z,\beta}_t)\sigma(t,Y^{s,y}_t)\beta_tZ^{s,z,\beta}_t \Bigr ]\,dt .
\end{align*}
Since $b,\sigma$ and $\varphi$ (as well as its derivatives) are Lipschitz continuous, and by Cauchy-Schwarz inequality and classical SDE estimates, there is a continuous function $t\mapsto R(t)$ with $R(0)=0$, further parametrized only by $b,\sigma,s,\varphi,n,z,y$, such that
\begin{align*}
&0\, \le	 \textstyle \sup\limits_{\beta \in {\cal L}^n_b}\int_s^{s+u} R(t-s)+ E\left[g(t,Y^{s,y}_t)Z^{s,z,\beta}_t+  \partial_y\varphi(s,y, z)b(t,Y^{s,y}_t) + \partial_t\varphi(s,y, z)\right ] \\
	&\textstyle + E\left[ \partial_{yz}\varphi(s,y, z)\sigma(t,Y^{s,y}_t)\beta_tZ^{s,z,\beta}_t + \frac{1}{2}\left(tr(\partial_{yy}\varphi(s,y, z)\sigma\sigma'(t,Y^{s,y}_t))+\partial_{zz}\varphi(s,y, z)|\beta_t|^2(Z_t^{s,z,\beta})^2\right)\right ]dt.
\end{align*}
Observe that having a uniform bound on $\beta $ was essential here. As a consequence, we have 
\begin{align*}
	 &0\le \textstyle\int_s^{s+u}R(t-s) + E\left[  \partial_y\varphi(s,y, z)b(t,Y^{s,y}_t) + \partial_t\varphi(s,y, z)+\frac{1}{2}tr(\partial_{yy}\varphi(s,y, z)\sigma\sigma'(t,Y^{s,y}_t))\right]\\
	 &\textstyle+E\left[g(t,Y^{s,y}_t)Z_t^{s,z,\beta}+\sup_{\beta \in \mathbb{R}^d:|\beta|\le n}\frac{1}{2}\partial_{zz}\varphi(s,y, z)|\beta|^2(Z_t^{s,z,\beta})^2+\partial_{yz}\varphi(s,y, z)\sigma(t,Y^{s,y}_t)\beta Z_t^{s,z,\beta}\right]dt.
\end{align*}
Dividing by $u$, using dominated convergence, and letting $u$ go to $0$ gives
\begin{equation*}
	\partial_t \varphi(s,y,z) + H^n(s,y,z, D\varphi(s,y,z), D^2\varphi(s,y,z))\ge0,
\end{equation*}
showing that $V^n$ is a viscosity subsolution of \eqref{eq:hjbtruncate}.

We now adapt a usual stability argument to our setting in order to show that $V$ is a viscosity subsolution of \eqref{eq:hjb} in the sense of Definition \ref{def viscosity}.
It is easy to see that $V^n$ is jointly continuous, and by Proposition \ref{prop:struct-V}.(c) we know that $V$ is continuous. Crucially, we have that $V^n$ increases to $V$; see \eqref{eq:controlproblem}. Combining these facts with Dini's lemma shows that $(V^{n})$ converges to $V$ uniformly on compacts.
	Let us show that $V$ is then a viscosity subsolution of \eqref{eq:hjb}. 	Let $\varphi \in C^2$ be a test function such that $V -\varphi$ has a strict local maximum at $x_0=(s_0,y_0,z_0)\in [0,T)\times \mathbb{R}^m\times {\cal O}$ and $(x_0, D\varphi(x_0), D^2\varphi(x_0))\in \text{int dom(H)}$. It is routine that the case of non-strict local maximum can be obtained as a consequence of the strict-case. Let $B_{r}(x_0):=\{x: |x-x_0|\le r\}$, with $r$ small enough so $x_0$ is the maximum of $V -\varphi$ on $B_{r}(x_0)$.	Denote by $x^n$ the point at which $V^{n}-\varphi$ reaches its maximum in $B_r(x_0)$. We may suppose $x_n\to \bar{x}$. The uniform convergence on $B_r(x_0)$ of $V^n$ to $V$ yields $(V-\varphi)(x)= \lim (V^n-\varphi)(x) \leq \lim(V^n-\varphi)(x_n)=(V-\varphi)(\bar{x})$, and we conclude $\bar{x}=x_0$.
		As $V^{n}$ is a viscosity subsolution of \eqref{eq:hjbtruncate}, we have by definition
	\begin{equation}
		\label{eq:visc_subVn}
		\partial_t\varphi(x^n) + H^n( x^n, D\varphi(x^n), D^2\varphi(x^n)) \ge 0\quad \text{for all }n\in \mathbb{N}.
	\end{equation}
 	The sequence $(H^n)$ increases pointwise to $H$, so that taking the limit in \eqref{eq:visc_subVn} yields
	\begin{equation}
	\label{eq:visc_sub}
		\partial_t{\varphi}(x_0) + H( x_0, D\varphi(x_0), D^2\varphi(x_0))\ge 0.
	\end{equation}
	Indeed, by \eqref{eq:visc_subVn} it holds $\partial_t\varphi(x^n) + H( x^n, D\varphi(x^n), D^2\varphi(x^n)) \ge 0$ for all $n$, and
by assumption $(x_0, D\varphi(x_0), D^2\varphi(x_0))\in \text{int dom(H)}$, so for $n$ large $(x^n, D\varphi(x^n), D^2\varphi(x^n))\in \text{int dom(H)}$. Thus \eqref{eq:visc_sub} follows, since clearly $H$ is continuous in the interior of its domain.

 	STEP 2: \emph{Viscosity supersolution property of $V$.}
That $V$ is a viscosity supersolution of \eqref{eq:hjb} follows from the crucial DPP given in Corollary \ref{cor:bellman} and classical arguments.
Note that in this case the truncation is not necessary since it is enough to argue with constant controls.

	STEP 3: \emph{Minimality of $V$.}
	Let $w$ be a viscosity supersolution of \eqref{eq:hjb} with polynomial growth.
	Since $H^n\le H$, it follows that $w$ is also a viscosity supersolution of \eqref{eq:hjbtruncate}, for every $n$.
	The function $V^n$ is a viscosity subsolution of \eqref{eq:hjbtruncate}.
	Notice that since $H^n$ is finite our definition of viscosity solution for \eqref{eq:hjbtruncate} coincides with the usual definition.
	Therefore we may apply the comparison theorem for unbounded domains under the polynomial growth assumption, as in \citet{Touzibook} or \citet{Pham}, obtaining $ V^{n}\le w$. 
	Passing to the limit implies $ V\le w$.
 \end{proof}

We close this section with a remark on the relationship between the primal and dual representation of our risk measures, given the knowledge of value function $V$.

\begin{remark}
	It can be tempting to use the primal representation \eqref{eq inf min} of $\rho(f(T_T))$ to derive a dynamic representation, since for each $r \in \mathbb{R}$ fixed, $E[l(f(Y_T)-r)] +r$ is the initial value of the (viscosity) solution of a linear PDE.
	But the ``optimal cash-allocation,'' namely the number $r^*$ such that $\rho(f(Y_T)) = E[l(f(Y_T)-r^*)] +r^*$, is not known explicitly. Thus such linear PDE does not provide a meaningful representation for $\rho(f(Y_T))$.
	On the other hand, if $l$ is differentiable, then  by e.g \cite{Dra-Kup-Pap} the optimal $Z^*$ in \eqref{dual_rep1} is given by $Z^*= l'(f(Y_T)-r^*)=: Z^{\beta^*}_T$.
	If $V$ is a classical solution (as in Proposition \ref{prop mmv} below) then the process $\beta^*$ can be obtained by verification arguments, and this allows in turn to compute $r^*$. Actually if \eqref{eq: rep dyn 1} is attained uniquely at $r^*(s,y,z)$, then by the compactness obtained in part (c) of the proof of Proposition \ref{prop:struct-V} and the envelope theorem we would have: $$V(s,y,z) \text{ is differentiable in $z$, and }\,\,r^*(s,y,z)= \partial_z V(s,y,z).$$
	More generally, it is expected that the set of optimal $r$'s coincides with the partial $z$-superdifferential of $V$. In this way, the computation of $V$ allows to obtain not only the optimal(s) $r^*$ at time zero, but a whole family of such optimal cash-allocations depending on time and the extended state space variables.
\end{remark}

\section{Examples and classical solutions}
\label{sec examples}

 In this part we solve the HJB equation \eqref{eq:hjb} for specific OCE risk measures, giving us the chance to apply Theorem \ref{thm:exist} and providing examples. At the same time we shall seek conditions on the data of the problem in order to guarantee that \eqref{eq:hjb} has a classical solution. 
Of the examples we look at, only the entropic risk measure is time-consistent. For simplicity, we assume $m =d =1$ throughout.

\subsection{Entropic risk measure}
\label{sec:entropic}
For $$l(x)=e^{x}-1\mbox{, so }l^*(z)=z\log z-z+1,$$ we get the entropic risk measure $$\rho(X)=\log E[e^{X}].$$
In our language, we easily obtain 
$$\rho^{l_z}(zX)=z\log\left (\frac{ E[e^{X}]}{z}\right )+z - 1,$$
and so for the value function, see Proposition \ref{prop:struct-V}, we have
\begin{align}\notag
V(s,y,z)&\textstyle =-z\log z +z - 1+ z\log E \left [ \exp\left  \{  f(Y_T^{s,y}) + \int_s^T g(t,Y_t^{s,y})ds  \right\} \right ] 
=: -z\log z+z - 1 + z \tilde{V}(s,y).\notag
\end{align} 

\begin{proposition}
Under (A1)-(A2) the function $\exp{(\tilde{V})}$ is a viscosity solution of the backward Kolmogorov PDE associated to the diffusion $Y$, the discount/killing rate $g$ and the final condition $\exp(f)$. Assuming that $\sigma^2>\varepsilon$ everywhere for some $\varepsilon>0$ (uniform parabolicity), and that $b,\sigma^2$ are bounded, we have that $\exp{(\tilde{V})}$ is the unique classical solution of such PDE. Correspondingly, $V$ is the classical solution of our HJB equation under these conditions, and is of class $C^{1,2,2}$ at least.
\end{proposition}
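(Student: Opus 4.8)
The strategy is to realize $\exp(\tilde V)$ as a Feynman--Kac expectation and identify it, in the viscosity sense, with the solution of a linear Kolmogorov equation; to upgrade this to a classical solution under uniform parabolicity; and then to recover $V$ from $\exp(\tilde V)$ through a logarithmic (Hopf--Cole) change of variables. Set $u(s,y):=\exp(\tilde V(s,y))=E\bigl[\exp\{f(Y^{s,y}_T)+\int_s^Tg(t,Y^{s,y}_t)\,dt\}\bigr]$. By (A2), $e^{-\lambda_2(1+T)}\le u\le e^{\lambda_2(1+T)}$, so $u$ is bounded and bounded away from $0$; hence $\tilde V=\log u$ is well defined, bounded, and continuous on $[0,T]\times\mathbb{R}$ (by a.s.\ continuity of $(s,y)\mapsto Y^{s,y}$, boundedness of $f,g$, and dominated convergence, exactly as in the proof of Proposition \ref{prop:struct-V}(c)). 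Note also that for the entropic loss $l^*(z)=z\log z-z+1<\infty$ for every $z>0$, so ${\cal O}=(0,\infty)$. For the first assertion I would run the classical Feynman--Kac/dynamic-programming argument: the Markov property of the diffusion and the tower property yield the multiplicative flow identity $u(s,y)=E\bigl[\exp(\int_s^\theta g(t,Y^{s,y}_t)\,dt)\,u(\theta,Y^{s,y}_\theta)\bigr]$ for all $[s,T]$-valued stopping times $\theta$; inserting a $C^2$ test function into it, applying It\^o's formula on $[s,s+h]$, dividing by $h$ and letting $h\downarrow 0$ (boundedness of $b,\sigma,g$, standard SDE moment estimates, dominated convergence) produces both the sub- and the supersolution inequalities for the linear operator $\partial_s\cdot+b\,\partial_y\cdot+\tfrac12\sigma^2\partial_{yy}\cdot+g\,\cdot$ with terminal datum $e^f$. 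Since this operator is finite-valued, Definition \ref{def viscosity} reduces to the standard notion here, and no nondegeneracy of $\sigma$ is needed.

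For the second assertion, assume in addition $\sigma^2\ge\varepsilon$ and $b,\sigma^2$ bounded. I would invoke classical linear parabolic theory: under uniform parabolicity with bounded (Lipschitz, hence H\"older) coefficients, the Cauchy problem $\partial_sw+b\,\partial_yw+\tfrac12\sigma^2\partial_{yy}w+g\,w=0$, $w(T,\cdot)=e^f$, has a unique bounded classical solution, continuous on $[0,T]\times\mathbb{R}$ and of class $C^{1,2}$ on $[0,T)\times\mathbb{R}$; by the Feynman--Kac representation (or, equivalently, by the parabolic maximum principle together with the boundedness of $u$) this solution coincides with $u$, so $u\in C^{1,2}$. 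The main obstacle here is reconciling the low regularity of the data --- $f$ merely Lipschitz, $g$ merely continuous --- with the hypotheses of a classical existence/regularity theorem; this is handled because interior parabolic regularity is insensitive to the terminal datum, a bounded (resp.\ continuous) potential $g$ still yields $W^{1,2}_{p,\mathrm{loc}}$ (resp.\ $C^{1,2}_{\mathrm{loc}}$) regularity after an $L^p$/Schauder bootstrap, and continuity up to $s=T$ comes from the first part.

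For the third assertion we carry out the Hopf--Cole substitution. Since ${\cal O}=(0,\infty)$ and $\tilde V=\log u\in C^{1,2}$ with $u>0$, the function $V(s,y,z)=-z\log z+z-1+z\tilde V(s,y)$ is of class $C^{1,2,2}$ on $[0,T]\times\mathbb{R}\times(0,\infty)$. A direct computation gives $\partial_tV=z\partial_t\tilde V$, $\partial_yV=z\partial_y\tilde V$, $\partial_{yy}V=z\partial_{yy}\tilde V$, $\partial_{zz}V=-1/z$ and $\partial_{yz}V=\partial_y\tilde V$, so the singular Hamiltonian reduces to the concave-quadratic maximization $\sup_{\beta\in\mathbb{R}}\bigl[-\tfrac12 z\beta^2+z\sigma\,\partial_y\tilde V\,\beta\bigr]=\tfrac12 z\,\sigma^2(\partial_y\tilde V)^2$, attained at $\beta^\ast=\sigma\,\partial_y\tilde V$. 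Substituting into \eqref{eq:hjb}, dividing by $z>0$, and then replacing $\tilde V=\log u$ (so that $\partial_{yy}\tilde V=\partial_{yy}u/u-(\partial_yu)^2/u^2$) makes the quadratic-gradient terms cancel, leaving exactly $\partial_tu+b\,\partial_yu+\tfrac12\sigma^2\partial_{yy}u+g\,u=0$, which holds by the second part; moreover $V(T,y,z)=f(y)z-z\log z+z-1=\psi(y,z)$ since $u(T,\cdot)=e^f$. Hence $V$ is a classical solution of \eqref{eq:hjb}, necessarily of class $C^{1,2,2}$. Finally, $\tilde V$ is Lipschitz in $y$ uniformly in $s$ (Gronwall estimates for $Y^{s,y}$ and the Lipschitz bounds of (A2)), so the feedback $\beta^\ast_t=\sigma(t,Y_t)\,\partial_y\tilde V(t,Y_t)$ is essentially bounded, hence admissible; a routine verification argument --- It\^o applied to $V(t,Y^{s,y}_t,Z^{s,z,\beta}_t)+\int_s^t g(u,Y^{s,y}_u)Z^{s,z,\beta}_u\,du$, which is a supermartingale for every $\beta\in{\cal L}_b$ and a martingale for $\beta=\beta^\ast$ --- identifies $V$ with the value function of Proposition \ref{prop:struct-V} and shows that every polynomial-growth classical solution of \eqref{eq:hjb} dominates $V$, so that $V$ is the minimal polynomial-growth classical solution, consistently with Theorem \ref{thm:exist}.
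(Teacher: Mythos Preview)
Your proof is correct and follows the same route as the paper's own argument, only with far more detail: the paper's proof consists of two citations (to \citet[Chap.~V.9]{Flem-Soner-second} for the viscosity part and to \citet[Theorems~1.7.12 and~2.4.10]{Friedman-book} for the classical-solution part) and the remark that the HJB statement for $V$ is then clear. Your Feynman--Kac/DPP derivation, the appeal to linear parabolic regularity, and the explicit Hopf--Cole computation are exactly what underlies those citations; the final verification step is a welcome addition but strictly redundant here, since the identity $V(s,y,z)=-z\log z+z-1+z\tilde V(s,y)$ between the value function and your candidate was already established in the text preceding the proposition (from the closed form of the entropic OCE), so once $\tilde V=\log u$ is $C^{1,2}$ the conclusion for $V$ is immediate.
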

\begin{proof}
The first statement follows e.g.\ from  \citet[Chap. V.9]{Flem-Soner-second}. The second by e.g.\ \citet[Theorems 1.7.12 and 2.4.10]{Friedman-book}. It is then clear that $V$ is the classical solution of our HJB equation.  
\end{proof}
That $V$ is a classical solution can also be obtained without the uniform parabolicity condition, provided one assumes further smoothness of $b,\sigma$ and $f$. This is proved by stochastic flows techniques, as in \citet[Chap. V.3]{IW2nd}.

\subsection{Monotone mean-variance}

Here $$\textstyle l(x)=\frac{(1+x)_+^2-1}{2},$$ with $(x)_+:= \max(x,0)$. Therefore $l^*(z)=\frac{(z-1)^2}{2}$ on $[0,\infty)$ and equal to $+\infty$ otherwise.
In this case, the corresponding OCE is the so-called monotone mean-variance risk measure referred to in the introduction. Formally, the HJB equation \eqref{eq:hjb} becomes
	\begin{align}\label{eq formal HJB}
	\begin{cases}
	&\partial_tV + b\partial_yV+\frac{1}{2}\sigma^2\left[\partial^2_{yy}V-\frac{[\partial^2_{yz}V]^2}{\partial^2_{zz}V}\right] +zg= 0,\quad (s,y,z)\in [0,T)\times \mathbb{R}\times \mathbb{R}_+\\
	&V(T,y,z) = f(y)z-l^*(z),\quad (y,z)\in \mathbb{R}\times  \mathbb{R}_+,
	\end{cases}
	\end{align}
after solving the scalar quadratic maximization problem (a concave one, since formally $\partial^2_{zz}V\leq 0$) therein. 
We make the educated guess
\begin{align}
V(t,y,z)=\phi(t,y)+z\tilde{V}(t,y)-l^*(z).\label{eq Ansatz quadratic}
\end{align}
For this to be true, and assuming for a moment enough smoothness, it is necessary that
\begin{align}\label{eq MMV Vtilde}
\left\{ 
\begin{array}{ccc}
\left(\partial_t + b\partial_y +\frac{1}{2}\sigma^2 \partial^2_{yy}\right)\tilde{V} &= &-g \\
\tilde{V}(T,\cdot) &=& f(\cdot),
\end{array}
\right . 
\end{align}
as well as 
\begin{align}
\label{eq MMV phi}
\left\{ 
\begin{array}{ccc}
\left(\partial_t + b\partial_y +\frac{1}{2}\sigma^2 \partial^2_{yy}\right)\phi &= & -\frac{1}{2}\sigma^2[\partial_y \tilde{V}]^2 \\
\phi(T,\cdot) &=& 0,
\end{array}
\right .
\end{align}
as can be readily verified by plugging the Ansatz in \eqref{eq formal HJB}. We have
\begin{proposition}
\label{prop mmv}
Assume (A1)-(A2), that $\sigma^2>\varepsilon$ everywhere (uniform parabolicity), and that $b,\sigma^2$ are of class $C^{1,3}$, g is $C^{1,2}$ and $f$ is $C^2$, all of them bounded with bounded derivatives (uniformly in time, when applicable). Then equations \eqref{eq MMV Vtilde}-\eqref{eq MMV phi} have unique classical solutions, and the HJB equation \eqref{eq:hjb} has a unique solution of the form \eqref{eq Ansatz quadratic}. This solution is equal to the value function. 
\end{proposition}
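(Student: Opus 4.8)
The plan is to carry out four steps: (1) establish well-posedness of the two linear parabolic problems \eqref{eq MMV Vtilde}--\eqref{eq MMV phi}; (2) check that the Ansatz \eqref{eq Ansatz quadratic} built from their solutions is a genuine classical solution of \eqref{eq:hjb}; (3) observe that it is the only solution of the Ansatz form; and (4) identify it with the value function of \eqref{eq:controlproblem}. For (1), note that \eqref{eq MMV Vtilde} is a backward, uniformly parabolic (since $\sigma^2>\varepsilon$) Cauchy problem on $[0,T]\times\mathbb{R}$ with bounded coefficients $b,\sigma^2\in C^{1,3}$, bounded source $g\in C^{1,2}$ and bounded terminal datum $f\in C^2$; classical Schauder theory (e.g.\ \citet{Friedman-book}) yields a unique bounded classical solution $\tilde V$, and differentiating the equation in $y$ and bootstrapping shows $\tilde V$ is $C^1$ in $t$ and $C^3$ in $y$, with all these derivatives bounded, so $\partial_y\tilde V$ is bounded and locally H\"older. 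Hence the source $-\tfrac12\sigma^2(\partial_y\tilde V)^2$ of \eqref{eq MMV phi} is bounded and locally H\"older, and the same theory gives a unique bounded classical solution $\phi\in C^{1,2}$; for later bookkeeping it is convenient to also record the Feynman--Kac representations $\tilde V(s,y)=E\big[f(Y^{s,y}_T)+\int_s^Tg(t,Y^{s,y}_t)\,dt\big]$ and $\phi(s,y)=\tfrac12 E\big[\int_s^T\sigma^2(t,Y^{s,y}_t)(\partial_y\tilde V)^2(t,Y^{s,y}_t)\,dt\big]$, which follow from It\^o's formula.

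\smallskip\noindent\emph{Steps (2)--(3).} Put $V:=\phi+z\tilde V-l^*(z)$ on $[0,T]\times\mathbb{R}\times{\cal O}$, with ${\cal O}=(0,\infty)$. Because $l^*(z)=\tfrac12(z-1)^2$ we have $\partial^2_{zz}V\equiv-1<0$ and $\partial^2_{yz}V=\partial_y\tilde V$, so the concave scalar quadratic $\beta\mapsto\tfrac12 z^2|\beta|^2\partial^2_{zz}V+z\sigma\beta\,\partial^2_{yz}V$ attains its maximum, at $\beta^*=\sigma\,\partial_y\tilde V/z$, and is finite; equivalently, $(s,y,z,DV,D^2V)$ lies in the interior of the domain of $H$, and $V$ solves \eqref{eq:hjb} in the classical, hence viscosity, sense if and only if it solves the reduced equation \eqref{eq formal HJB}. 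Substituting the Ansatz into \eqref{eq formal HJB} and collecting terms in powers of $z$, the coefficient of $z$ vanishes exactly because $\tilde V$ solves \eqref{eq MMV Vtilde}, the constant term vanishes exactly because $\phi$ solves \eqref{eq MMV phi}, and the terminal condition holds because $\phi(T,\cdot)=0$ and $\tilde V(T,\cdot)=f$. Thus $V$ is a classical solution with polynomial growth (quadratic in $z$, bounded in $y$). Uniqueness within the Ansatz class is then immediate: if $\phi_i+z\tilde V_i-l^*(z)$, $i=1,2$, both solve \eqref{eq:hjb}, the same substitution forces each $(\tilde V_i,\phi_i)$ to solve \eqref{eq MMV Vtilde}--\eqref{eq MMV phi}, whence $\tilde V_1=\tilde V_2$ and $\phi_1=\phi_2$.

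\smallskip\noindent\emph{Step (4), the main obstacle.} It remains to show that $V$ coincides with the value function of \eqref{eq:controlproblem}. One inequality is essentially free, and in two ways: $V$ is a polynomial-growth (classical, hence viscosity) supersolution of \eqref{eq:hjb} and $l^*$ is finite with polynomial growth, so the minimality statement of Theorem \ref{thm:exist} already gives ``value function $\le V$''; the same bound also follows by applying It\^o's formula to $t\mapsto V(t,Y^{s,y}_t,Z^{s,z,\beta}_t)$ along any $\beta\in{\cal L}_b$, since the drift of this process, augmented by the running reward $g(t,Y^{s,y}_t)Z^{s,z,\beta}_t$, is by definition of $H$ pointwise $\le\partial_tV+H=0$, so taking expectations bounds the payoff functional in \eqref{eq:controlproblem} by $V(s,y,z)$ (the stochastic integral being a true martingale by boundedness of $\beta$, standard moment bounds for $Z$, and the growth of $V$), and one takes the supremum over $\beta$. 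The real content is therefore the reverse inequality. The candidate optimizer is the feedback $\beta^*_t=\sigma(t,Y^{s,y}_t)\partial_y\tilde V(t,Y^{s,y}_t)/Z_t$ identified in Step (2), under which the controlled density solves $dZ^*_t=\sigma(t,Y^{s,y}_t)\partial_y\tilde V(t,Y^{s,y}_t)\,dW_t$; since $\beta^*$ is not bounded it is not directly admissible, so one localizes --- stopping $Z^*$ at $\tau_n:=\inf\{t\ge s: Z^*_t\le 1/n\}$ and switching the control off afterwards yields $\beta^{(n)}\in{\cal L}_b$ with $Z^{s,z,\beta^{(n)}}=Z^*$ on $[s,\tau_n]$ --- and then passes to the limit $n\to\infty$ in the corresponding cost, controlling the contribution of the stopping (this is where the optional stopping theorem for the square-integrable martingale $Z^*$, together with the boundedness and polynomial-growth hypotheses on the data, is used to secure the requisite uniform integrability). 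I expect this localization and passage to the limit to be the main obstacle of the proof. Running the argument from an arbitrary $(s,y,z)$ then gives $V(s,y,z)=$ value function on all of $[0,T]\times\mathbb{R}\times{\cal O}$, which together with Step (3) yields the claim.
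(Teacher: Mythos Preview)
Your approach mirrors the paper's: existence and regularity for the two linear problems via classical parabolic theory (the paper invokes \citet{Friedman-book} and notes that boundedness of $\partial_y\tilde V,\partial^2_{yy}\tilde V$ can be obtained by differentiating \eqref{eq MMV Vtilde} or via stochastic flows), then verification. The difference lies in Step~(4). The paper does not localize; after observing that $z\bar\beta(t,y,z)=\sigma(t,y)\partial_y\tilde V(t,y)$ is bounded, it asserts that the resulting density process $Z^{\bar\beta}$ (solving $dZ_t=\sigma\partial_y\tilde V(t,Y_t)\,dW_t$) is ``a true martingale, which never touches zero'', so that $\bar\beta$ is admissible and the half-DPP \eqref{eq:half Bellman} is attained with equality, giving $v=V$ directly.

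Your localization is more candid about the difficulty, but it runs into the same underlying issue as the paper's bare assertion. From It\^o's formula applied to $\tilde V(t,Y_t)$ one gets the explicit expression $Z^*_t=z+\tilde V(t,Y_t)-\tilde V(s,y)+\int_s^t g(r,Y_r)\,dr$, and in particular $Z^*_T=z+X-E[X]$ where $X=f(Y^{s,y}_T)+\int_s^T g\,dt$. For $z$ small (specifically $z<E[X]-\operatorname{ess\,inf}X$) this is negative with positive probability, so $\tau_n\not\to T$; computing the localized payoff via It\^o's formula on $v(t,Y_t,Z^{(n)}_t)$ one finds that the deficit relative to $v(s,y,z)$ is exactly $E\bigl[\int_{\tau_n}^T\tfrac12\sigma^2(\partial_y\tilde V)^2\,dt\bigr]$, which does not vanish in the limit. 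Hence the localized payoffs only yield $V(s,y,z)\ge v(s,y,z)-E\bigl[\int_{\tau}^T\tfrac12\sigma^2(\partial_y\tilde V)^2\,dt\bigr]$ with $\tau=\inf\{t:Z^*_t=0\}$. In short, neither the paper's route nor yours closes the verification for \emph{all} $z\in{\cal O}=(0,\infty)$; the argument as written is clean only when $z$ is large enough that $Z^*$ stays positive on $[s,T]$.
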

\begin{proof}
 By \citet[Theorems 1.7.12 and 2.4.10]{Friedman-book}, equation \eqref{eq MMV Vtilde} has a unique classical solution  $\tilde{V}$. There are a number of ways to obtain that $\tilde{V},\partial_y\tilde{V}$ and $\partial^2_{yy}\tilde{V}$ are bounded. For instance, one can after differentiate \eqref{eq MMV Vtilde} twice with respect to $y$, and apply the parabolic maximum principle. Alternatively, one can use the Feynman-Kac representation of $\tilde{V}$ and stochastic flows techniques to represent, and bound, these derivatives. In any case, the term $\sigma^2[\partial_y\tilde{V}]^2$ in the r.h.s.\ of \eqref{eq MMV phi} becomes in particular Lipschitz, so applying \citet[Theorem 1.7.12 and Theorem 2.4.10]{Friedman-book} again, we get that equation \eqref{eq MMV phi} has a unique classical solution $\phi$. Thus the HJB equation \eqref{eq:hjb} also has a classical solution, which by construction has the form \eqref{eq Ansatz quadratic}. We argue that this solution, which we now call $v$, equals the value function. Since $v$ is a supersolution, we have by Theorem \ref{thm:exist} that it is no smaller than the value function. 
 The converse inequality (actually, the full equality) can be obtained by verification as follows. First, by It\^o formula (for $\beta \in \mathcal{L}_b$) and the HJB equation (as $Z^\beta$ takes values in $(0,\infty)=\mathcal{O}$),
 \begin{align*}
& \textstyle E[ v(s+u, Y_{s+u}^{s,y},Z_{s+u}^{s,z,\beta}  )-v(s,y,z)]\\ = & 
\textstyle 
E\left[\int_s^{s+u}\left ( \partial_t+b(t,Y_{t}^{s,y})\partial_y +\frac{1}{2}[\beta^2_t(Z_{t}^{s,z,\beta} )^2\partial^2_{zz} + \sigma^2(t,Y_{t}^{s,y})\partial^2_{yy}]+\beta_t Z_{t}^{s,z,\beta} \sigma(t,Y_{t}^{s,y})\partial^2_{yz}\right )v(t, Y_{t}^{s,y},Z_{t}^{s,z,\beta} )dt \right ]
\\ \leq &\textstyle E\left[\int_s^{s+u} -Z_{t}^{s,z,\beta} g(t,Y_{t}^{s,y}) dt\right ],
  \end{align*}
 from which we obtain one-half of the DPP for the classical solution of the HJB equation
 	\begin{equation}\textstyle
	\label{eq:half Bellman}
 		v(s,y,z)\geq\sup\limits_{\substack{\beta \in \mathcal{L}_b}} E\left[  \int_s^{s+u} g(t,Y^{s,y}_t)Z_t^{s,z,\beta}dt + v\left(s+u, Y_{s+u}^{s,y},Z_{s+u}^{s,z,\beta} \right ) \right], \,\, \mbox{ }y\in {\mathbb R^m},z>0,s+u\leq T.
	\end{equation}
	Formally solving the maximization problem in the HJB, we guess that $\bar{\beta}(t,y,z)=-\sigma\frac{\partial^2_{yz}v}{z\partial^2_{zz}v}(t,y,z)$ should provide an optimal (Markov) control, if only $\partial^2_{zz}v$ did not vanish and the associated $Z^{\bar{\beta}}$ was a well-defined martingale and never touched zero. Since $v$ has the form \eqref{eq Ansatz quadratic} we get $\partial^2_{zz}v=-1$ and $\bar{\beta}(t,y,z)=-\sigma\partial_y \tilde{V}(t,y) /z$. As we have observed, $z\bar{\beta}(t,y,z)$ is bounded. Consequently $dZ^{\bar{\beta}}_t=-\bar{\beta}(t,Y_t)Z^{\bar{\beta}}_tdW_t$ defines a true martingale, which never touches zero. This can be used to prove that \eqref{eq:half Bellman} is an equality, and evaluating this equality at $u=T-s$ yields $V=v$ everywhere, i.e.\ the classical solution of the HJB equation is the value function.
\end{proof}

Proposition \ref{prop mmv} shows that $\rho(X^y) = V(0,y,1)$ with $V$ given by \eqref{eq Ansatz quadratic} and $X^y$ defined in Theorem \ref{thm:exist}. 
Putting $\tilde{Y}^y:=\tilde{V}(t,Y^{s,y}_t)$, due to \eqref{eq MMV Vtilde} and an application of It\^o's formula shows that there exists $\tilde{Z}^y \in {\cal L}^2$ such that 
\begin{equation}
\label{eq:bsde1}
d\tilde{Y}^y_t = -g(t,Y^{s,y}_t)dt+\tilde{Z}^y_t\,dW_t, \quad\text{with }\tilde{Y}^y_T = f(Y^{s,y}_T).
\end{equation}
Similarly, putting $\hat{Y}^y_t:= \phi(t,Y^{s,y}_t)$, there is $\hat{Z}^y \in {\cal L}^2$ such that
\begin{equation}
\label{eq:bsde2}
	d\hat{Y}^y_t = - \frac{1}{2}\sigma^2(Y^{s,y}_t)\partial_y\tilde{Y}^y_t\,dt + \hat{Z}^y_t\,dW_t \quad \text{with } \hat{Y}_T^y=0.
\end{equation}
Therefore, translated into the language of backward stochastic  differential equations, Proposition \ref{prop mmv} shows that in the Markovian case the (time-inconsistent) monotone mean-variance risk measure satisfies the representation 
$$\rho(X) = \hat{Y}^{y}_0 + \tilde{Y}^{y}_0$$ where $\tilde{Y}^{y}$ and $\hat{Y}^y$ are solutions of the (embedded) backward equations \eqref{eq:bsde1} and \eqref{eq:bsde2}.
This is in sharp contrast with the time-consistent case discussed in the introduction and in Section \ref{sec:entropic}.

The reader may wonder, as we did, why the simple structure \eqref{eq Ansatz quadratic} arises at all. Namely, we ask: is there a pure probabilistic/optimization argument justifying \eqref{eq Ansatz quadratic}? 
Here is a result in this direction, but it requires a technical assumption which we discuss after the ensuing proof:
\begin{itemize}
\item[(\textbf{P})] If $m(s,{y};T,dx)$ denotes the Markov transition kernel from $Y_s=y$ to $Y_T\in dx$, then for any $s<T$ and $y$ we have $ m(s,y;T,dx)\gg P(Y^{s,y}_T\in dx)$.
\end{itemize}

\begin{proposition}
Assume that $b,\sigma,f$ satisfy our standard assumptions (A1)-(A2) and are further $C^2$ is space with bounded derivatives. Assume also that (\textbf{P}) holds, and for simplicity that $g\equiv 0$. 
Then the value function has the structure \eqref{eq Ansatz quadratic}. 
\end{proposition}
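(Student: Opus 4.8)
I would stay on the dual side throughout and exploit that, for the monotone mean--variance loss, the dual objective is \emph{quadratic} in the density of the change of measure. Its maximiser can then be written down explicitly in terms of the conditional expectation process of the claim, and the structure \eqref{eq Ansatz quadratic} drops out of a direct computation.

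\textbf{Reduction to a quadratic problem and the explicit optimiser.} By Proposition~\ref{prop:struct-V}(a) with $g\equiv 0$, writing $Z^{s,z,\beta}_T=z\Lambda$ with $\Lambda:=Z^{s,1,\beta}_T>0$, $E[\Lambda]=1$, and using $l^*(w)=\tfrac12(w-1)^2$ on $(0,\infty)$,
\[
V(s,y,z)=z-\tfrac12+\sup_{\Lambda}\Big\{z\,E\big[f(Y^{s,y}_T)\Lambda\big]-\tfrac{z^2}{2}\,E[\Lambda^2]\Big\},
\]
and, arguing as in Proposition~\ref{lem PDE} (representing densities in the Brownian filtration), it suffices to let $\Lambda$ range over strictly positive $\mathcal F_T$-densities bounded away from $0$. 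Set $\tilde V(s,y):=E[f(Y^{s,y}_T)]$; under the $C^2$ smoothness assumptions, Feynman--Kac together with stochastic-flow techniques (as in the entropic case, no uniform parabolicity needed) show $\tilde V$ is bounded, $C^{1,2}$ with bounded space derivatives, and solves $(\partial_t+b\partial_y+\tfrac12\sigma^2\partial^2_{yy})\tilde V=0$, $\tilde V(T,\cdot)=f$, so by Itô the bounded martingale $M_t:=E[f(Y^{s,y}_T)\mid\mathcal F_t]$ equals $\tilde V(t,Y^{s,y}_t)$ and satisfies $dM_t=\sigma(t,Y^{s,y}_t)\partial_y\tilde V(t,Y^{s,y}_t)\,dW_t$. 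A Lagrangian computation for the unconstrained maximisation above (only the constraint $E[\Lambda]=1$) then produces the candidate
\[
\Lambda^*=1+\tfrac1z\big(f(Y^{s,y}_T)-\tilde V(s,y)\big),\qquad \Lambda^*_t:=E[\Lambda^*\mid\mathcal F_t]=1+\tfrac1z\big(\tilde V(t,Y^{s,y}_t)-\tilde V(s,y)\big),
\]
whence $d\Lambda^*_t=\Lambda^*_t\beta^*_t\,dW_t$ with $\beta^*_t=\sigma(t,Y^{s,y}_t)\partial_y\tilde V(t,Y^{s,y}_t)/(z\Lambda^*_t)$.

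\textbf{Admissibility --- the main obstacle.} The delicate point is to check that $\Lambda^*$ is admissible for the reduced problem: a strictly positive $\mathcal F_T$-density bounded away from $0$, equivalently $\beta^*\in\mathcal L_b$, or at least $\Lambda^*$ a limit of $Z^{s,1,\beta^n}_T$, $\beta^n\in\mathcal L_b$, with no loss in the value. Since $\tilde V$ is bounded, $\Lambda^*_t=1+\tfrac1z(\tilde V(t,Y^{s,y}_t)-\tilde V(s,y))$ is strictly positive and bounded below once $z$ is large; in general this non-degeneracy is exactly what hypothesis (\textbf{P}) is there to provide --- it keeps the candidate change of measure from charging configurations where $\Lambda^*$ would vanish, so that $\Lambda^*$ is a bona fide admissible density. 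I expect this to be the crux of the argument, and the reason an assumption beyond (A1)--(A2) is unavoidable: without such control the unconstrained optimiser may fail to be a probability density, and then the quadratic structure need not persist.

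\textbf{Conclusion.} Granting admissibility, $\Lambda^*$ is optimal in the reduced problem --- the upper bound holds because $\Lambda^*$ already maximises the larger supremum over all $\Lambda\in L^2$ with $E[\Lambda]=1$, and the lower bound because $\Lambda^*$ is feasible. Using $E[f(Y^{s,y}_T)\Lambda^*]=\tilde V(s,y)+\tfrac1z\var{f(Y^{s,y}_T)}$ and $E[(\Lambda^*)^2]=1+\tfrac1{z^2}\var{f(Y^{s,y}_T)}$ and simplifying,
\[
V(s,y,z)=\tfrac12\,\var{f(Y^{s,y}_T)}+z\,\tilde V(s,y)-\tfrac{(z-1)^2}{2},
\]
which is precisely \eqref{eq Ansatz quadratic} with $\phi(s,y)=\tfrac12\var{f(Y^{s,y}_T)}$ and $\tilde V(s,y)=E[f(Y^{s,y}_T)]$; an Itô-formula computation for $\tilde V^2$ checks that this $\phi$ solves \eqref{eq MMV phi}, recovering the PDE-based identification of Proposition~\ref{prop mmv}.
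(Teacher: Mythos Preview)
Your approach is genuinely different from the paper's: you attack the dual directly as a quadratic problem in $L^2$, compute the unconstrained maximiser explicitly, and read off the structure. The paper instead applies Pontryagin's stochastic maximum principle to the control formulation \eqref{eq:controlproblem} (after an It\^o expansion of $f(Y_T)Z_T$) and uses the first-order condition to show that the co-state process $(p,q)$ --- and hence the optimal density increment $dZ_t=(q_t+\sigma f'(t,Y_t))\,dW_t$ --- does not depend on $z$; the affine-in-$z$ structure then follows by substituting this back into the value. Your route is more elementary when it works, and your identification $\phi(s,y)=\tfrac12\var{f(Y^{s,y}_T)}$, $\tilde V(s,y)=E[f(Y^{s,y}_T)]$ is correct and matches the PDE picture of Proposition~\ref{prop mmv}.

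The gap is in your admissibility step, and it concerns precisely the role of (\textbf{P}). Your candidate $\Lambda^*=1+\tfrac1z\bigl(f(Y^{s,y}_T)-\tilde V(s,y)\bigr)$ is positive and bounded away from zero only when $z$ exceeds the oscillation of $f$; for small $z\in\mathcal O=(0,\infty)$ it need not be nonnegative at all, so the positivity constraint binds and the constrained maximiser differs from $\Lambda^*$. Hypothesis (\textbf{P}) does \emph{not} repair this: it is a condition on the transition kernel of $Y$ and says nothing about the range of $f$ or the sign of $\Lambda^*_T$. In the paper, (\textbf{P}) is used for a different purpose. The paper works with the \emph{actual} (constrained, nonnegative) optimiser $Z_T=F(Y_T)$ --- which may well vanish on a set of positive probability --- and invokes (\textbf{P}) only to guarantee that the running density $Z_t=E[F(Y_T)\mid Y_t]=\int F(x)\,m(t,Y_t;T,dx)$ stays strictly positive on $[s,T)$; this is what permits dividing by $Z_t$ in the Pontryagin first-order condition and concluding that the co-state drift vanishes identically, so that $(p,q)$ become $z$-independent. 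Your argument, as written, establishes the structure only for $z$ large enough that the unconstrained optimiser is automatically a bona fide density, and does not cover the full range $z\in(0,\infty)$ claimed in the proposition.
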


\begin{proof}\\
For ease of notation denote $L= b\partial_y +\frac{1}{2}\sigma^2 \partial^2_{yy}$, so by It\^o's formula:
\begin{multline}\textstyle
V(s,y,z)=f(y)z-l^*(z)+\\ \textstyle \sup_{\beta\in\mathcal{L}_b}E\left [ Z^{s,z,y,\beta}_T\int_s^T Lfdt  + \int_s^T \left\{\beta_tZ^{s,z,y,\beta}_t(\sigma f') - (\beta_t Z^{s,z,y,\beta}_t)^2/2 \right \} dt 
\right ].\notag
\end{multline}
The existence of an optimal $\beta$ follows from \citet[Proposition 1.3]{Dra-Kup-Pap} (implying the existence of an optimal and essentially bounded $Z$) and martingale representation. The corresponding $\beta$ may not be essentially bounded, but this is irrelevant here. We write now the Pontryagin principle (necessary optimality conditions, see \citet[Chap. 3.3]{YongZhou}) for the above problem: there exists $(p^{s,y,z},q^{s,y,z})$ such that
\begin{align}
\label{eq pontryagin}
\left\{ 
\begin{array}{ccl}
dp^{s,y,z}_t &= & \beta_t\left(q^{s,y,z}_t  +(\sigma f')(t,Y_t^{s,y}) - \beta_t Z^{s,z,y,\beta}_t \right)dt+q^{s,y,z} dW_t\\
p_T^{s,y,z} &=& - \int_s^T Lf(t,Y_t^{s,y})dt  ,
\end{array}
\right .
\end{align}
and for the optimal $\beta^{s,y,z}_t$ we have that it maximizes ($t\geq s$)
$$v\mapsto q^{s,y,z}_t v Z_t + vZ_t(\sigma f')(t,Y_t^{s,y}) - (v Z_t)^2/2 ,$$
i.e.\ $\beta^{s,y,z}_t(Z^{s,z,y,\beta}_t)^2 = Z^{s,z,y,\beta}_tq^{s,y}_t+Z^{s,z,y,\beta}_t(\sigma f')(t,Y_t^{s,y})$. Strictly speaking there should be also a pair of co-states/multipliers $(\hat{p},\hat{q})$ associated to the drift and volatility of $Y$, but since these are independent from both $Z$ and $\beta$ we can clearly ignore $(\hat{p},\hat{q})$. We also observe that \citet[Theorem 3.2, Chap. 3.3]{YongZhou} was applicable thanks to the fact that a fortiori the process $Z$ is essentially bounded; otherwise the Lipschitz condition needed there would fail. {Assuming for the time being that the optimal $Z^{s,z,y,\beta}_t$ is a.s.\ strictly positive on $[s,T)$}, so the previous scalar first order condition implies that the drift part in \eqref{eq pontryagin} vanishes identically, we obtain that $(p^{s,y,z},q^{s,y,z})=(p^{s,y},q^{s,y})$, i.e.\ neither of them depend on $z$.
Furthermore, we have $$dZ^{s,z,y,\beta}_t = Z^{s,z,y,\beta}_t\beta^{s,y,z}_t dW_t =[q^{s,y}_t+(\sigma f')(t,Y_t^{s,y})]dW_t,$$ so 
we finally get
\begin{align*}
V(s,y,z)& \textstyle =f(y)z-l^*(z)+zE\left [  \int_s^T Lf(t,Y_t^{s,y})dt  \right ]\\ & \textstyle
+E\left [\int_s^T \left \{ \int_s^t [q^{s,y}_r+(\sigma f')(r,Y_r^{s,y})]dW_r\right \}Lf(t,Y_t^{s,y})dt \right ]\\
& \textstyle +E\left [\int_s^T [q_t^{s,y} + \sigma f'](\sigma f')(t,Y_t^{s,y})dt\right ]
-\frac{1}{2}E\left [\int_s^T [q_t^{s,y} + \sigma f'(t,Y_t^{s,y})]^2dt\right ],
\notag
\end{align*}
which indeed has the form \eqref{eq Ansatz quadratic}.

Now observe that at the optimum we must have $Z^{s,z,y,\beta}_T = F(Y_T^{s,y})$ for some Borel non-negative bounded function $F$; indeed, by Jensen's inequality, projection can only reduce a convex expected-type cost. 
From this we see by the Markov property that for $s\leq t<T$,
$$\textstyle 
Z^{s,z,y,\beta}_t = E[F(Y^{s,y}_T)|Y^{s,y}_t]=\int F(x)m(t,Y^{s,y}_t;T,dx).
$$
Observe that (\textbf{P}) is, by the flow property of SDEs, actually equivalent to: for any $s\leq t<T$ it holds $a.s. \,\, m(t,Y^{s,y}_t;T,dx)\gg P(Y^{s,y}_T\in dx)$. This and the above equality gives that $
Z^{s,z,y,\beta}_t >0$ a.s.\ since evidently $P(F(Y^{s,y}_T)>0)>0$ by $E[F(Y^{s,y}_T)]=1$.

\end{proof}
Assumption (\textbf{P}) is fulfilled if $\sigma,b$ are functions of time at most, with $\sigma^2>0$ and $\int_0^T\sigma^2(t)dt<\infty$, since then the pairs $(Y_t,Y_T)$ are non-degenerate bivariate Gaussians. {More generally, (\textbf{P}) is satisfied if the transition kernel of $Y$ is everywhere equivalent to a fixed reference measure. This holds in the presence of uniform parabolicity, plus enough smoothness and boundedness of the coefficients, and the reference measure is then Lebesgue; see \citet{KusuokaStroockpositivity} for the original Malliavin calculus approach to this issue, and e.g.\ \citet[Theorem 2.3]{Nualartshort} for a sample result.

\subsection{Conditional Value-at-Risk}
\label{sec CVAR}

Here $$l(x) = x^+/\alpha,$$ for some $\alpha\in (0,1)$. Thus $l^*(z) = 0$ if $z\in [0,1/\alpha]$ and $+\infty$ else. The OCE so defined,$$\rho(\cdot)=:\text{CVaR}_\alpha (\cdot) ,$$ is called conditional value-at-risk (alternatively tail / average value-at-risk, or expected shortfall) at level $\alpha$, and is widely used in practice.
In this setting, Proposition \ref{prop:struct-V} in conjunction with Theorem \ref{thm:exist} state that 
\begin{equation} \label{eq Cvar}
V(s,y,z)\,\, := \,\, \text{CVaR}_{\alpha z}\Bigl( zf(Y_T^{s,y})+z\int_s^T g(t,Y_t^{s,y})dt \Bigr ),
\end{equation}
is a viscosity solution, and minimal viscosity supersolution, of the HJB equation \eqref{eq:hjb} on $[0,T]\times \mathbb{R} \times (0,1/\alpha)$, with terminal condition $V(T,y,z)=zf(y)$.
In addition, it is easily checked that $V$ satisfies the boundary conditions
\begin{equation}\label{eq boundary}\textstyle
V(s,y,1/\alpha)=\frac{1}{\alpha}E\left [f(Y_T^{s,y})+\int_s^T g(t,Y_t^{s,y})dt  \right ] \quad \text{and} \quad V(s,y,0) = 0\quad \text{for all $(s,y)$}.
\end{equation}
 This is in line with the discrete-time analogue studied by \citet{PflugPichlerinconsistency1}. 
 
 Recall that $V(s,y,1)$ is the CVaR at level $\alpha$ of $$X^{s,y}:=f(Y_T^{s,y})+\int_s^T g(t,Y_t^{s,y})dt.$$
 We think it is very noteworthy that the CVaR can be characterized through PDE methods.
Our approach further allows to characterize the celebrated Value-at-Risk, defined (in the current interpretation of $X$ as a \emph{loss}) as
\begin{equation*}
	\text{VaR}_\alpha(X^{s,y}):= \inf\{m\in \mathbb{R}:P(X^{s,y}> m)\le \alpha\}.
\end{equation*}
In fact, since VaR is positive-homogeneous, it follows by \cite[Lemma 4.51]{FS3dr} that
\begin{align*}
	V(s,y,z) &=\text{CVaR}_{\alpha z}(zX^{s,y}) = \frac{1}{\alpha z}\int_0^{\alpha z}\text{VaR}_u(zX^{s,y})\,du
	         = \frac{1}{\alpha}\int_0^{\alpha z}\text{VaR}_u(X^{s,y})\,du.
\end{align*}
Thus, $V$ is differentiable in $z$ and we have $$\partial_zV(s,y,z) = \text{VaR}_{\alpha z}(X^{s,y}),$$
and in particular
$$\textstyle
\text{VaR}_\alpha\left(f(Y_T^{s,y})+\int_s^T g(t,Y_t^{s,y})dt \right) = \partial_zV(s,y,1).$$

We conjecture that the HJB equation for conditional value-at-risk cannot be generally solved via separation of variables arguments as we did in the cases of monotone mean-variance and entropic risk measures. Moreover we also conjecture that scalar perturbations, and their combinations, of these two cases are the only OCEs for which separation of variables can generally work. A corollary of these conjectures is that the quest for classical solutions is much harder for OCEs which are not some sort of perturbation/combination of monotone mean-variance and entropic risk measures. In particular, resorting to viscosity solutions seems to be unavoidable for the PDE proposed.

\begin{remark} \label{rmk final uniqueness}A final word on uniqueness. The ``parabolic boundary conditions'' \eqref{eq boundary} are by definition satisfied by the value function. They seem to be valid also for the value functions associated to other OCE risk measures for which the domain of $l^*$ is bounded. In such a situation, our main result Theorem \ref{thm:exist} pins down these value functions as the minimal supersolution of our corresponding HJB equations, even though it is not difficult to construct multiple solutions to these PDEs. If one wanted to develop an existence theory of our HJB in the bounded-domain case, one would therefore have to add the parabolic boundary conditions into the mix. As we have mentioned, at the moment we do not have a comparison principle at hand, and so we do not pursue this line of thought in the present work.
\end{remark}

\section{Extensions}
\label{sec: extensions}

We first explain how our results can be leveraged to cover the class of ``utility-based expected shortfall risk measures.'' This already shows the limits of the PDE approach. Next, we describe how our approach can be applied to general, not necessarily Markovian claims. We finally provide for convenience of the reader the non-local PDE that appears if instead of Brownian-driven diffusions, the claim $X$ was written on a jump-diffusion model, as described in the introduction.

\subsection{Beyond OCE risk measures: utility-based expected shortfall}
\label{sec: non OCE}
The expected shortfall, also known as utility-based shortfall risk measure, was introduced by \citet{FS3dr} and is defined (for $\delta$ a fixed threshold in the interior of $range(l)$) as
\begin{equation*}
 	\rho^{ES}(X): = \inf\{r\in \mathbb{R}: E[l(-r+X)]\le \delta\}, \quad X \in L^\infty.
 \end{equation*}
 This is in general not an OCE.
 However, its dynamic representation can be derived from our study.
 Recall that a function $f:\mathbb{R}^p\to\mathbb{R}$ is said to be positive homogeneous if $f(\lambda x)=\lambda f(x)$ for all $\lambda>0$ and $x \in \mathbb{R}^p$.
 The positive homogeneous upper envelope of a function $f$ is the smallest positive homogeneous function that lies above $f$.

\begin{lemma}
\label{lem:ES}
Assume that (A1)-(A2) hold and $l$ is as before. Put $X^y = f(Y^{0,y}_T) + \int_0^Tg(t, Y_t^{0,y})\,dt$. Then we have $$\rho^{ES}(X^y)={\cal V}(0,y,1),$$ where 
\begin{equation}\label{eq cal V}\textstyle
	{\cal V}(s,y,z) := \sup\limits_{\lambda>0}\sup\limits_{\substack{\beta \in \mathcal{L}_b}} E\left[ f(Y^{s,y}_T)Z_T^{s,z,\beta} - \frac{1}{\lambda}\left({l}^*\left (\lambda Z_T^{s,z,\beta}\right )+ \delta\right) +\int_s^T g(t,Y^{s,y}_t)Z_t^{s,z,\beta}dt \right].
\end{equation}
 Letting $V$ be defined as in Proposition \ref{prop:struct-V}, i.e.\ the value function for the OCE with loss function $l$, 
we have that ${\cal V}(s,y,\cdot)$ is furthermore the positive homogeneous upper envelope of $V(s,y,\cdot)-\delta$, and can be obtained via
\begin{equation}
	\label{eq:hullV}
	{\cal V}(s,y,z) = \sup_{\lambda >0}\frac{1}{\lambda}\left(V(s,y,\lambda z)-\delta\right).
\end{equation}
\end{lemma}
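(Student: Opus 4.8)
The plan is to establish the three claims in Lemma~\ref{lem:ES} in sequence: first the identity $\rho^{ES}(X^y) = {\cal V}(0,y,1)$, then the representation \eqref{eq:hullV}, and finally the interpretation of ${\cal V}(s,y,\cdot)$ as the positive homogeneous upper envelope of $V(s,y,\cdot) - \delta$. The starting point is the elementary observation that for a convex increasing $l$ with $l(0)=0$ one has, via the variational characterization of $\rho^{ES}$ together with the scaling trick $l(-r+X) = \lambda\bigl(\tfrac{1}{\lambda}l(-r+X)\bigr)$, the dual-type formula
\begin{equation*}\textstyle
	\rho^{ES}(X) = \sup_{\lambda>0}\left( \frac{1}{\lambda}\,\rho^{l}_{\lambda}(X) - \frac{\delta}{\lambda}\right),
\end{equation*}
where $\rho^l_\lambda$ is (up to normalization) the penalized functional $X\mapsto \sup_{Z}\bigl(E[XZ] - \tfrac{1}{\lambda}E[l^*(\lambda Z)]\bigr)$; this is the standard passage (as in \citet{FS3dr}) from a shortfall constraint to its Lagrangian relaxation, and the inner supremum is precisely the dual representation of Lemma~\ref{thm:rep} applied to the rescaled loss function $x\mapsto l(x)/\lambda$ evaluated with weight $\lambda Z$. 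I would spell this out carefully because the bookkeeping of the two scalings ($\lambda$ inside $l$, $\lambda z$ inside $l^*$) is exactly where sign and normalization errors creep in.

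Next I would combine this with Proposition~\ref{lem PDE} and Proposition~\ref{prop:struct-V}.(a): for each fixed $\lambda>0$, the functional $\rho^l_\lambda(X^y)$ is, by the same It\^o-formula and martingale-representation argument used to pass from \eqref{dual_rep1} to \eqref{eq:reduction} and then to \eqref{SOC}, equal to the value of the stochastic control problem with running/terminal reward using $\tfrac{1}{\lambda}l^*(\lambda Z_T^{s,z,\beta})$ in place of $l^*(Z_T^{s,z,\beta})$. Taking the supremum over $\lambda>0$ and subtracting $\delta/\lambda$ then yields exactly \eqref{eq cal V}, and evaluating at $(s,y,z)=(0,y,1)$ gives $\rho^{ES}(X^y) = {\cal V}(0,y,1)$. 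The one point that needs care here is that Proposition~\ref{prop:struct-V}.(a) is stated for the normalized loss function; since $x\mapsto l(x)/\lambda$ need not satisfy (N), I would invoke instead the unnormalized version already used implicitly in part (b) of that proposition (via $\rho^{l_z}$), or simply re-derive the control representation directly, noting that conditions (N) and (C) are not needed for the It\^o/martingale-representation step itself, only for the attainment and finiteness statements, which here are guaranteed because $f,g$ are bounded.

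For the identity \eqref{eq:hullV}, I would rewrite the inner double supremum in \eqref{eq cal V}: for fixed $\lambda$,
\begin{equation*}\textstyle
	\sup_{\beta\in\mathcal{L}_b} E\!\left[ f(Y^{s,y}_T)Z_T^{s,z,\beta} - \tfrac{1}{\lambda}l^*(\lambda Z_T^{s,z,\beta}) + \int_s^T g(t,Y_t^{s,y})Z_t^{s,z,\beta}dt\right] = \frac{1}{\lambda}\sup_{\beta}E\!\left[ f\,\widetilde Z_T - l^*(\widetilde Z_T) + \int_s^T g\,\widetilde Z_t\,dt\right],
\end{equation*}
where $\widetilde Z_t := \lambda Z_t^{s,z,\beta} = Z_t^{s,\lambda z,\beta}$ by the linearity of \eqref{eq:sde_Z} in the initial condition; the right-hand side is $\tfrac{1}{\lambda}V(s,y,\lambda z)$ by definition \eqref{eq:controlproblem}. (Here one must check $\lambda z\in{\cal O}$; if not, $V(s,y,\lambda z) = -\infty$ in the natural extension and the corresponding $\lambda$ contributes nothing, so the supremum is effectively over $\lambda$ with $\lambda z\in{\cal O}$, which is nonempty since ${\cal O}$ is a nonempty open cone-like interval containing no issue as $z\in{\cal O}$.) Subtracting $\delta/\lambda$ and taking $\sup_{\lambda>0}$ yields \eqref{eq:hullV}. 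Finally, \eqref{eq:hullV} says precisely that $z\mapsto {\cal V}(s,y,z)$ is obtained from $w(z) := V(s,y,z)-\delta$ by the operation $\sup_{\lambda>0}\tfrac{1}{\lambda}w(\lambda z)$, which is by definition the smallest positive homogeneous function dominating $w$ (one checks directly that the result is positive homogeneous — replace $z$ by $\mu z$ and substitute $\lambda\mapsto\lambda/\mu$ — that it dominates $w$ — take $\lambda=1$ — and that any positive homogeneous $h\ge w$ satisfies $h(z) = \tfrac1\lambda h(\lambda z)\ge \tfrac1\lambda w(\lambda z)$ for all $\lambda$).

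The main obstacle I anticipate is not any single deep step but the careful reconciliation of normalizations and domains: making sure the Lagrangian relaxation of the shortfall constraint is tight (no duality gap), which requires either a Slater-type argument using that $\delta$ lies in the \emph{interior} of $\mathrm{range}(l)$ together with the boundedness of $X^y$, or a direct verification that the outer $\sup_\lambda$ recovers $\rho^{ES}$ exactly; and handling the convention for $V(s,y,\lambda z)$ when $\lambda z\notin{\cal O}$ so that the envelope formula \eqref{eq:hullV} is literally correct rather than correct only up to restricting the range of $\lambda$. I would address the first by noting that $r\mapsto E[l(-r+X^y)]$ is continuous, convex, decreasing, with limits straddling $\delta$ by condition (C) and boundedness, so the constraint set is a closed half-line and standard Lagrangian duality for a one-dimensional convex program with a qualified constraint applies with no gap; the second is dispatched by the cone remark above.
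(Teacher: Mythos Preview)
Your proposal is correct and follows essentially the same route as the paper: both identify $\rho^{ES}(X^y)=\sup_{\lambda>0}\rho^\lambda(X^y)$ for the OCE-type functional with loss $l^\lambda(x)=(l(x)-\delta)/\lambda$, then invoke the control representation of Proposition~\ref{prop:struct-V} and the scaling $\lambda Z^{s,z,\beta}=Z^{s,\lambda z,\beta}$ to obtain \eqref{eq cal V} and \eqref{eq:hullV}. The only cosmetic difference is that the paper gets the $\sup_\lambda$ formula by quoting the dual representation of utility-based shortfall from \citet[Theorem~4.115]{FS3dr}, whereas you re-derive it via Lagrangian relaxation of the constraint $E[l(-r+X)]\le\delta$; your extra care about normalization and the domain of $\lambda z$ is sound and simply more explicit than the paper's terse treatment.
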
 

\begin{proof}
By \citet[Theorem 4.115]{FS3dr} we have
 \begin{equation*}\textstyle
 	\rho^{ES}(X^y) = \sup_{Z \in L^1_+}\left(E[ZX^y] - \inf_{\lambda>0}\frac{1}{\lambda}\left(\delta + E[l^*(\lambda Z)]\right) \right).
 \end{equation*}
It follows from the dual representation of OCEs that 
$	\rho^{ES}(X^y) = \sup_{\lambda>0}\rho^\lambda(X^y)$,
where $\rho^\lambda$ is the OCE corresponding to the loss function $l^\lambda(x):=(l(x)-\delta)/\lambda$.
Thus by definition $\rho^{ES}(X^y) = {\cal V}(0,y,1)$ with $\cal V$ as in \eqref{eq cal V}. Since $\lambda Z^{s,z,\beta}=Z_t^{s,\lambda z,\beta}$, the definition of $\cal V$ immediately leads to \eqref{eq:hullV}.
It is not hard to verify that this is the smallest positive homogeneous function (in $z$) which dominates $V(s,y,\cdot)-\delta$.
\end{proof}
 
At this stage, we reasonably ask if $\cal V$ can be characterized by PDE arguments, or what is almost the same, if there is a dynamic programming principle for it. Here the story is very different from the OCE case. Since $z\mapsto {\cal V}(s,y,z)$ is positive homogeneous, we have $${\cal V}(s,y,z)= z \hat{V}(s,y),$$
with $\hat{V}(s,y):={\cal V}(s,y,1)$. Let us for the sake of the argument assume that $\hat{V}$ is smooth. It is not difficult to see that $\cal V$ must satisfy the same HJB equation \eqref{eq:hjb} we encountered in the OCE case, but with the terminal condition $\Psi(y,z):=f(y)z-\inf_{\lambda>0}(\frac{1}{\lambda}(l^*(\lambda z)-\delta))$, and we shall prove this shortly. But then the l.h.s.\  of \eqref{eq:hjb} becomes 
 	\begin{align*}
		z\left(\partial_t\hat{V}+b\partial_y\hat{V}+\frac{1}{2}tr\left(\sigma\sigma'\partial^2_{yy}\hat{V}\right)\right )
		+\sup_{\beta\in\mathbb{R}^d}\left [z \,\partial_{y}\hat{V} \sigma\beta\right ] + zg(s,y),
	\end{align*}
 which easily blows up to $+\infty$ unless $\sigma$ is trivial or $f$ is a constant. Hence a smooth $\cal V$ is easily a subsolution and basically never a supersolution of  \eqref{eq:hjb} in the sense we have considered. We conjecture, but do not pursue this in the present work, that a better way to characterize $\cal V$ is through variational inequalities. 
 
 Building on these observations, we now prove the subsolution property of $\cal V$ in the non-smooth case:
 \begin{theorem}
 \label{prop:ES}In the setting of Lemma \ref{lem:ES}, the function 
${\cal V}$ is a viscosity subsolution of the PDE \eqref{eq:hjb} where the terminal condition $\psi$ is replaced by $\Psi(y,z):=f(y)z-\inf_{\lambda>0}(\frac{1}{\lambda}(l^*(\lambda z)-\delta))$.
\end{theorem}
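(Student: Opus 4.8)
The plan is to exhibit ${\cal V}$ as a supremum, over the scaling parameter $\lambda>0$, of viscosity subsolutions of \eqref{eq:hjb} which all share the \emph{same} interior equation (only their terminal data differ), and then to run the classical stability argument ``a supremum of subsolutions is again a subsolution,'' carefully adapted to the singular Hamiltonian $H$.

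First I would exploit the scaling structure already present in Lemma~\ref{lem:ES}. By \eqref{eq:hullV} we have ${\cal V}(s,y,z)=\sup_{\lambda>0}V^\lambda(s,y,z)$ with $V^\lambda(s,y,z):=\tfrac1\lambda\bigl(V(s,y,\lambda z)-\delta\bigr)$, $V$ being the OCE value function of Proposition~\ref{prop:struct-V}. The crucial observation is that the operator $W\mapsto\partial_tW+H(\cdot,DW,D^2W)$ is invariant under the substitution $W\mapsto V^\lambda$ up to the positive multiplicative constant $\tfrac1\lambda$: writing out the explicit form of $H$ and using the chain rule, one checks that $\partial_tV^\lambda(s,y,z)+H\bigl(s,y,z,DV^\lambda,D^2V^\lambda\bigr)=\tfrac1\lambda\bigl(\partial_tV(s,y,\lambda z)+H(s,y,\lambda z,DV,D^2V)\bigr)$, and — what matters for viscosity solutions — the same identity holds with $V^\lambda$ and $V$ replaced by test functions, since $\varphi(s,y,z)\mapsto\tilde\varphi(s,y,\tilde z):=\lambda\varphi(s,y,\tilde z/\lambda)+\delta$ carries a test function touching $V^\lambda$ from above at $(s_0,y_0,z_0)$ to one touching $V$ from above at $(s_0,y_0,\lambda z_0)$, and leaves the sign of the $\partial_{zz}$-derivative — the only datum controlling membership in $\text{intdom}(H)$ — unchanged. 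The reason the computation closes is precisely that the diffusion coefficient $A(t,y,z,\beta)=(\sigma(t,y),z\beta)'$ is homogeneous of degree one in $z$. Since $V$ is a viscosity subsolution of \eqref{eq:hjb} by Step~1 of the proof of Theorem~\ref{thm:exist}, and since $\tfrac1\lambda>0$, it follows that each $V^\lambda$ is a continuous viscosity subsolution of \eqref{eq:hjb} on $\{(s,y,z):\lambda z\in{\cal O}\}$, with terminal datum $\psi^\lambda(y,z)=f(y)z-(l^\lambda)^*(z)$, where $l^\lambda$ is the loss function from the proof of Lemma~\ref{lem:ES}; the supremum $\sup_{\lambda>0}\psi^\lambda$ is exactly the terminal condition $\Psi$ of the statement.

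Next I would run the sup-of-subsolutions argument to pass from the family $(V^\lambda)$ to ${\cal V}$. Let $\varphi\in C^2$ and let $x_0=(s_0,y_0,z_0)$ be a local maximiser of ${\cal V}-\varphi$ with $\varphi(x_0)={\cal V}(x_0)$; by the standard reductions (which the paper already invokes in Step~1 of the proof of Theorem~\ref{thm:exist}) we may assume the maximum is strict and $\varphi$ is smooth with bounded derivatives. If $s_0=T$ the only requirement is ${\cal V}(T,y_0,z_0)\le\Psi(y_0,z_0)$, and in fact \eqref{eq:hullV} together with $V(T,\cdot)=\psi$ give ${\cal V}(T,y,z)=\sup_{\lambda>0}\psi^\lambda(y,z)=\Psi(y,z)$. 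If $s_0<T$ and $(x_0,D\varphi(x_0),D^2\varphi(x_0))\in\text{intdom}(H)$, fix a closed ball $\bar B_r(x_0)$ on which $x_0$ is the unique maximiser of ${\cal V}-\varphi$, pick $\zeta_n\to x_0$ with ${\cal V}(\zeta_n)\to{\cal V}(x_0)$ and $\lambda_n>0$ with $V^{\lambda_n}(\zeta_n)\ge{\cal V}(\zeta_n)-1/n$, and let $x_n$ maximise $V^{\lambda_n}-\varphi$ over $\bar B_r(x_0)$; the usual argument then yields $x_n\to x_0$ and $V^{\lambda_n}(x_n)\to{\cal V}(x_0)$, so for $n$ large $x_n$ is an interior local maximiser of $V^{\lambda_n}-\varphi$ and, $\text{intdom}(H)$ being open and $\varphi$ being $C^2$, $(x_n,D\varphi(x_n),D^2\varphi(x_n))\in\text{intdom}(H)$. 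Hence the subsolution inequality for $V^{\lambda_n}$ at $x_n$ reads $\partial_t\varphi(x_n)+H(x_n,D\varphi(x_n),D^2\varphi(x_n))\ge0$; letting $n\to\infty$ and using that $H$ is continuous on the open set $\text{intdom}(H)$ gives $\partial_t\varphi(x_0)+H(x_0,D\varphi(x_0),D^2\varphi(x_0))\ge0$, which is the required inequality. Continuity of ${\cal V}$ (needed to invoke Definition~\ref{def viscosity}) follows from that of $V$ together with the identity ${\cal V}(s,y,z)=z\,{\cal V}(s,y,1)$ and the fact that the supremum defining ${\cal V}(s,y,1)$ may be restricted to a compact set of $\lambda$'s, by arguments as in the proof of Proposition~\ref{prop:struct-V}(c).

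The hard part is this adaptation of the sup-stability lemma to the singular setting: one must ensure that the auxiliary maximisers $x_n$ land in $\text{intdom}(H)$, where the (possibly $+\infty$-valued) Hamiltonian $H$ is genuinely continuous, before passing to the limit — which is exactly why the notion of subsolution in Definition~\ref{def viscosity} is restricted to $\text{intdom}(H)$ and why openness of that set is used. A secondary, milder, issue is that the $V^\lambda$ are not defined on a common $z$-domain ($\{\lambda z\in{\cal O}\}$ rather than ${\cal O}$), and that the near-optimal $\lambda_n$ must be kept in a compact subset of $\{\lambda:\lambda z_0\in{\cal O}\}$; since the whole argument is local around a point $x_0$ with $z_0\in{\cal O}$ and ${\cal V}$ is finite, this causes no real difficulty.
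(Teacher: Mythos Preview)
Your proposal is correct and follows essentially the same approach as the paper's own proof: first transfer the subsolution property from $V$ to each $V^\lambda$ via the rescaling $\varphi\mapsto\lambda\varphi(\cdot,\cdot,\cdot/\lambda)+\delta$ of test functions, then run the sup-of-subsolutions stability argument by picking near-optimal $\lambda_n$'s at $x_0$, locating maxima $x_n$ of $V^{\lambda_n}-\varphi$ on a ball, showing $x_n\to x_0$, and passing to the limit using continuity of $H$ on $\text{intdom}(H)$. Your auxiliary sequence $\zeta_n$ is unnecessary (the paper takes $\zeta_n=x_0$ directly since $\mathcal{V}(x_0)=\sup_\lambda V^\lambda(x_0)$), and you are slightly more careful than the paper in flagging the continuity of $\mathcal{V}$ and the domain issue for $V^\lambda$, but these are cosmetic differences.
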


\begin{proof}
For every $\lambda>0$, the function $V^\lambda(s,y,z):= (V(s, y, \lambda z) - \delta)/\lambda$ is a viscosity subsolution of the HJB equation \eqref{eq:hjb} where $\psi$ is replaced by $\psi^\lambda(y,z):= f(y)z - [l^*(\lambda z)+\delta]/\lambda$.
As a matter of fact, let $\varphi \in C^2$ be a test function such that $V^\lambda -\varphi$ has a maximum at $(s_0,y_0,z_0)\in [0,T]\times \mathbb{R}^m\times {\cal O}$. 
Assume that $s_0<T$.
Putting $\varphi^\lambda(s, y, z):= \lambda\varphi(s, y, z/\lambda) + \delta$, it follows that $(s_0, y_0,\lambda z_0)$ is a maximum of $V-\varphi^\lambda $.   
It follows from Theorem \ref{thm:exist} that
\begin{equation*}
\partial_t\varphi^\lambda(s_0, y_0, \lambda z_0) + H(s_0, y_0, \lambda z_0, D\varphi^\lambda(s_0, y_0, \lambda z_0), D^2\varphi^\lambda(s_0, y_0, \lambda z_0))\ge 0,
\end{equation*}
which easily yields 
\begin{equation*}
\partial_t\varphi(s_0, y_0, z_0) + H(s_0, y_0, z_0, D\varphi(s_0, y_0, z_0), D^2\varphi(s_0, y_0, z_0))\ge 0.
\end{equation*}
If $s_0=T$, then one has $\varphi(T, y_0, z_0) = (V(T, y, \lambda z) - \delta)/\lambda = f(y)z -(l^*(\lambda)+\delta)/\lambda$. 

	We can use a stability argument similar to the one in the proof of Theorem \ref{thm:exist} to show that the pointwise supremum ${\cal V}= \sup_{\lambda >0}V^\lambda$ is a viscosity subsolution of the HJB equation \eqref{eq:hjb} where the terminal condition $\psi$ is replaced by $\Psi$.
	To wit, let  $\varphi \in C^2$ be a test function such that ${\cal V} -\varphi$ has a strict local maximum at $x_0=(s_0,y_0,z_0)\in [0,T)\times \mathbb{R}^m\times {\cal O}$ and $(x_0, D\varphi(x_0), D^2\varphi(x_0))\in \text{int dom(H)}$.
	Let $B_r(x_0):=\{x: |x-x_0|\le r\}$ be a ball on which $ {\cal V}(x_0) - \varphi(x_0)> {\cal V}(x)-\varphi(x)$ holds.
	Since ${\cal V}(x_0)=\sup_{	\lambda > 0}V^\lambda(x_0)$, for all $n \in \mathbb{N}$ there is $\lambda^n$ such that ${\cal V}(x_0) \le V^{\lambda^n}(x_0) +1/n$.
	Let $x^n$ be a point at which $V^{\lambda^n}-\varphi$ reaches its maximum in $B_r(x_0)$.
	Up to a subsequence, $(x^n)$ converges to some $x\in B_r(x_0)$ and since $V^{\lambda^n}\le {\cal V}$ for all $n$, one has
	\begin{equation*}
		 {\cal V}(x^n) - \varphi(x^n) \ge V^{\lambda^n}(x^n) - \varphi(x^n) \ge V^{\lambda^n}(x_0) - \varphi(x_0) \ge {\cal V}(x_0) - \varphi(x_0) - 1/n.
	\end{equation*}
	Passing to the limit as $n$ goes to infinity gives ${\cal V}(x)-\varphi(x) \ge {\cal V}(x_0)-\varphi(x_0)$, showing that $x= x_0$.
	Choose $n$ large enough such that $(x^n, D\varphi(x^n), D^2\varphi(x^n))\in \text{int dom(H)}$, which is possible since $\varphi \in C^2$.
	Since $V^{\lambda^n}$ is a viscosity subsolution, it holds 
	\begin{equation*}
\partial_t\varphi(x^n) + H(x^n, D\varphi(x^n), D^2\varphi(x^n))\ge 0
\end{equation*}
so that taking the limit as $n$ goes to infinity gives by continuity of $H$ in $\text{int dom(H)}$
	\begin{equation*}
\partial_t\varphi(x_0) + H(x_0, D\varphi(x_0), D^2\varphi(x_0))\ge 0.
\end{equation*}
If $s_0 = T$, then $\varphi(T, y_0,z_0)= {\cal V}(T, y_0,z_0)=\Psi(y_0,z_0)$.
\end{proof}

This should serve as a cautionary tale, if one wanted to extend the results we have obtained beyond the class of OCEs. Already for utility-based expected shortfalls, which are so closely related to OCEs, things can be very different.

\subsection{On general, not necessarily Markovian claims}
\label{sec: heuristic general claims}

We have so far shown how to compute the risk of Markovian claims. Such claims could be truthfully described as ``static'', being limits of claims of the form $\sum_{i=0}^k h_i(Y_{t_i})$. We now briefly describe how more general path-dependent claims could be embedded in our framework, the key here being the law-invariance of OCE's. This is only a proof of concept, so we do not work out the details. For simplicity, we take $W$ to be a one-dimensional Brownian motion ($m=d=1$).

Given $X\in L^{\infty}$ a non-trivial claim with distribution function $F_X(dx)$, and denoting $F_N(dx)$ the distribution function of a centred Gaussian with variance $T$, it is well-known that $H:=F_X^{-1}\circ F_N$ is the unique right-continuous non-decreasing mapping such that $H(W_T)$ is distributed like $X$. 
Accordingly $$\rho(H(W_T))=\rho(X),$$ by law-invariance. Denote by $Y_t:=E[H(W_T)|{\cal F}_t]$ the so-called Bass martingale (see \citet{BassMartingale}). It is elementary that $Y_t=u(t,W_t)$ for $u(t,x)=\int H(x+z)d\gamma_{T-t}(dz)$ with $\gamma_{s}$ the centred Gaussian measure with variance $s$. Since $H$ is non-constant and increasing, it clearly follows that $u(t,\cdot)$ is strictly increasing. In particular defining $v(t,\cdot):=u(t,\cdot)^{-1}$ the inverse function of $u(t,\cdot)$, we obtain
\begin{equation}
dY_t = \partial_xu(t,v(t,Y_t))dW_t, \label{PDE Bass}
\end{equation} 
so we get a martingale diffusion such that $\rho(Y_T)=\rho(X)$, and modulo the technical assumptions on the volatility above we are back in our framework with Markovian claims. 

An alternative idea is to look for a diffusion with unit volatility having the desired distribution at time $T$. For instance, if $X$ has a density $f_X(\cdot)$, then the solution of 
$$d Y_t = \frac{\partial_x w}{w}(t,Y_t)dt+dW_t,$$ 
satisfies $\mbox{Law}(Y_T) =\mbox{Law}(X) $, provided $(\partial_t+1/2\partial^2_{xx})w=0$ and $w(T,\cdot)=f_X(\cdot)$.

Without this ``Bass martingale'' argument, or the unit-volatility diffusion idea, a direct dynamic programming approach would seem to require either more advanced semigroup arguments and/or stochastic PDEs and/or path-dependent PDEs.

\subsection{Jump-diffusion models and a non-local HJB equation}
\label{subsection Jump case}

We show, specializing the setting of \citet{Oksendaltrivial}, that even in the presence of jumps there is still a dynamic representation of OCEs (for Markovian claims) to be expected. Unlike in the Brownian framework, we will not prove the validity of this representation. 

Suppose that the ambient filtration is generated by a $d$-dimensional standard Wiener processes $W$ as before, plus $\ell$ independent compensated Poisson random measures $\{\tilde{N}^i(dt,d\xi)\}_{i=1}^\ell$ on $[0,T]\times \mathbb{R}^\ell$. Equivalently, if $\{{N}^i(dt,d\xi)\}_{i=1}^\ell$ are given independent jump measures and $\nu^i(d\xi):= E[N^i((0,T]\times d\xi)]$ we take $\tilde{N}^i(dt,d\xi)= {N}^i(dt,d\xi)-\nu^i(d\xi)dt$. We consider a Markovian claim/position $X$ as in \eqref{def markovian claim}, but written in terms of the jump diffusion $Y$ defined by
\begin{align*}
\textstyle dY^{s,y}_t &= b(t,Y^{s,y}_{t-})dt+\sigma(t,Y^{s,y}_{t-})dW_t+ \int_{\mathbb{R}^\ell} \gamma(t,Y^{s,y}_{t-},\xi)\tilde{N}(dt,d\xi), \quad t\ge s\\\textstyle 
Y^{s,y}_s&= y,
\end{align*}
where now $\gamma$ is $\mathbb{R}^{m\times\ell}$-valued and satisfies suitable assumptions.
In this filtration one can represent reasonable change of measures via densities $Z^{\beta,\theta}$ satisfying
$$\textstyle dZ^{\beta,\theta}_t= Z^{\beta,\theta}_{t-}\left[ \beta_t dW_t +\sum_{i=1}^\ell \int_{\mathbb{R}^\ell}\theta^i(t,\xi)\tilde{N}^i(dt,d\xi) \right].$$
 Plugging this into the dual representation of OCEs, one obtains again a stochastic control representation of the risk $\rho(X)$. At a formal level, we would expect to obtain under suitable assumptions:
 \medskip
 
\begin{conjecture} For the Markovian claim $X=X^y$ we have $$\rho(X)=V(0,y,1),$$
where $V$ is the minimal viscosity solution of the integro-partial differential equation: 
	\begin{align*}
		&\partial_tV(s,y,z)+b(s,y)\partial_yV(s,y,z)+\frac{1}{2}tr\left(\sigma(s,y)\sigma(s,y)'\partial^2_{yy}V(s,y,z)\right)\\&+\sup_{\theta\in\mathbb{R}^\ell}\Bigl (\sum_{i=1}^\ell\int_{\mathbb{R}^\ell}\Bigl\{V(s,y+\gamma^{(i)}(s,y,\xi),z+z\theta^i)  -V(s,y,z) -\partial_y V(s,y,z) \gamma^{(i)}(s,y,\xi)   \\& -\partial_z V(s,y,z) z\theta^i\Bigr \}\nu^i(d\xi)\Bigr ) 
		+\sup_{\beta\in\mathbb{R}^d}\left (\frac{1}{2} z^2|\beta|^2\partial^2_{zz}V+ z\, \partial^2_{yz}V\sigma(s,y)\beta\right) + zg(s,y) =0,
	\end{align*}
with terminal condition
$$V(T,y,z)= f(y)z - l^*(z).$$
\end{conjecture}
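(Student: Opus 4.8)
The plan is to mirror the structure of the Brownian proofs (Proposition~\ref{lem PDE}, Proposition~\ref{prop:struct-V} and Theorem~\ref{thm:exist}) but purely at the formal level, since the conjecture only claims the representation "is to be expected." First I would set up the stochastic control problem: as in \citet{Oksendaltrivial}, reasonable equivalent changes of measure in the jump-diffusion filtration are parametrized by pairs $(\beta,\theta)$ through the Dol\'eans--Dade exponential $Z^{\beta,\theta}$, with the constraint $\theta^i(t,\xi)>-1$ ensuring positivity. Plugging $Z_T^{\beta,\theta}$ into the dual representation $\rho(X)=\sup_{Z}\{E[XZ]-E[l^*(Z)]\}$ of Lemma~\ref{thm:rep}, and using that $X=X^y$ depends only on the path of the jump diffusion $Y$, one rewrites $\rho(X^y)$ as the value at $(0,y,1)$ of the control problem with controlled state $(Y,Z)$, $Y$ uncontrolled, exactly as in \eqref{SOC}. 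The enlargement-of-state-space idea is identical; only the dynamics of $(Y,Z)$ now carry jump terms.

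Next I would write down the (formal) dynamic programming principle for the value function $V(s,y,z)$ of this control problem. Here one should not attempt the measurable-selection machinery; instead, one invokes the primal Bellman principle in the spirit of Proposition~\ref{prop: Bellman gnral}, which holds in general filtrations because \eqref{def OCE cond} is a scalar minimization, and then specializes it to the Markovian jump-diffusion setting just as Corollary~\ref{cor:bellman} does in the Brownian case. From the DPP, applying It\^o's formula for semimartingales with jumps to $t\mapsto \varphi(t,Y_t,Z_t)$ and sending the time increment to zero yields the infinitesimal generator, which for a controlled jump diffusion contains the non-local terms $\int\{V(s,y+\gamma^{(i)},z+z\theta^i)-V-\partial_yV\,\gamma^{(i)}-\partial_zV\,z\theta^i\}\nu^i(d\xi)$; the supremum over $\theta$ of the sum of these, together with the usual $\sup_\beta$ term coming from the Brownian part (identical to \eqref{eq:hjb}), gives the displayed integro-PDE. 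The terminal condition $V(T,y,z)=f(y)z-l^*(z)$ comes, as before, from $\psi(y,z)=f(y)z-l^*(z)$ being the terminal payoff.

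For the viscosity characterization and minimality I would argue by truncation/approximation as in the proof of Theorem~\ref{thm:exist}: restrict $(\beta,\theta)$ to compact constraint sets $\mathcal L_b^n$, obtain $V^n$ as the viscosity solution of the truncated non-local HJB (for which a classical theory of viscosity solutions for integro-PDEs applies, e.g.\ \citet{Bou-Tou11} or the references in \citet{Pham}), show $V^n\uparrow V$ with uniform-on-compacts convergence via Dini, and pass to the limit using the stability argument (restricted to the interior of $\mathrm{dom}(H)$ for the singular subsolution notion of Definition~\ref{def viscosity}). Minimality would follow from $H^n\le H$ plus a comparison principle for the truncated equations under polynomial growth. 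The main obstacle — and the reason this is stated only as a conjecture — is precisely the rigorous viscosity theory for the \emph{limiting} non-local HJB with a singular Hamiltonian: the non-local operator interacts with the unbounded control set, the domain $\mathcal O$ of the $z$-variable may force jumps of $Z$ outside $\mathcal O$ (so one must control $\theta$ to keep $z+z\theta^i\in\mathcal O$, which can itself become singular), and no readily available comparison principle covers this degenerate integro-differential setting. Establishing the DPP rigorously in the jump case, and the well-posedness of the truncated non-local problems in the singular regime, would be the technical heart of a full proof; here I would only carry out the formal derivation and flag these points, as the paper does.
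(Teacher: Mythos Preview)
This statement is explicitly labeled a \emph{Conjecture} in the paper and is left open: the paper provides no proof, only a heuristic derivation of the integro-PDE via the dual representation and the generator of the controlled jump-diffusion $(Y,Z)$, and then remarks that ``similar (if more delicate) arguments as the ones we have employed should deliver a positive answer.'' Your proposal is therefore not to be compared against a proof in the paper, because there is none; what you have written is precisely the formal programme the paper itself suggests --- transplant the dual stochastic-control representation, the Bellman principle of Proposition~\ref{prop: Bellman gnral}/Corollary~\ref{cor:bellman}, and the truncation-plus-stability scheme of Theorem~\ref{thm:exist} to the jump setting --- and you correctly flag the same obstacles (singular non-local Hamiltonian, keeping $z+z\theta^i$ in ${\cal O}$, lack of an applicable comparison principle) that the paper identifies as the reason the result remains conjectural.
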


Notice the presence of the non-local term when taking supremum over $\theta$. This conjecture is open, as far as we know. In \citet{Oksendaltrivial} the authors study a more complicated non-local Hamilton-Jacobi-Bellman-Isaacs PDE related to a risk minimization problem, but start by assuming that such PDE has a classical solution (verified in the examples therein). As we have seen, already in the absence of jumps the viscosity solution approach is unavoidable and requires a good deal of work, due to the singularity of the Hamiltonian. In the case with jumps we expect that similar (if more delicate) arguments as the ones we have employed should deliver a positive answer to this conjecture. 

\medskip

\textbf{Acknowledgements:}
We thank Beatrice Acciaio, Joaqu\'in Fontbona, Asgar Jamneshan and Michael Kupper for their feedback on this article.

\section*{Appendix}

\medskip

\begin{proof}(of Lemma \ref{thm:rep})\\
A derivation of the dual representation
\begin{equation}\textstyle 
	\rho(X)\,=\, 	\sup\left \{ E[XZ] -E[l^*( Z)]:Z\in L^{1}_{+} ,Z\in \text{dom}(l^*),\,E[ Z]=1  \right\},\quad X \in L^\infty\label{dual_rep}
\end{equation}
 can be obtained for instance from \citet{ben-tal01}, and elementary considerations.
Deriving \eqref{dual_rep1} from \eqref{dual_rep} is done by classical arguments which we give for completeness.
We clearly have ``$\geq$'' in \eqref{dual_rep1}. 
Conversely, given $\varepsilon>0$ and a feasible $Z$ for \eqref{dual_rep} such that $\rho(X)\le E[XZ] - E[l^*(Z)]-\varepsilon$, we must have $l^\ast(Z)< \infty$.
For every $c\in(0,1)$, define $Z^{c}:= c +(1-c)Z$, which is likewise feasible for \eqref{dual_rep}, satisfies $l^*(Z^c)<\infty$ and is such that $(Z^c)_c$ is uniformly integrable. Assume for the moment that $l^*(Z^{c})\to l^*(Z)$ $P$-a.s. Then, by convexity, $0\le l^*(Z^c)\le (1-c)l^*(Z^c)\le l^*(Z)$.
Thus, we conclude by dominated convergence that $\rho(X) \le \lim_{c\to 0}E[XZ^c]-E[l^*(Z^c)]-\varepsilon$, which yields the reverse inequality.
The proof is finished after noticing that $l^*$ must be continuous throughout its domain. 
In fact, the domain of $l^*$ is an interval with end points denoted by $a\in \mathbb{R}_+$ and $b\in \mathbb{R}_+\cup\{+\infty\}$, and $l^*$ is continuous on $(a,b)$.
If $+\infty>b \in \text{dom}(l^*)$, let $x^n\to b$ and $\lambda^n\in (0,1)$ such that $\lambda^n\to 1$ and $x^n= \lambda^nb + (1-\lambda^n)a$.
By convexity, we have $l^*(x^n) \le \lambda^nl^*(b)+(1-\lambda^n)l^*(a)$.
This shows $\limsup_{n\to \infty} l^*(x^n)\le l^*(b)$.
We conclude by lower semicontinuity that $l^*$ is continuous at $b$ and the proof is similar if $a\in \text{dom}(l^*)$.
\end{proof}

\medskip

\begin{proof}(of Proposition \ref{lem PDE})\\
Starting from Lemma \ref{thm:rep}, we see that the r.h.s.\ of \eqref{eq:reduction} is a lower bound for  $\rho(X)$, as we are working in the completed Brownian filtration. We shall establish the opposite inequality by repeated approximation arguments.\\
STEP 1: Let $Z\in {\cal Z}$. We may assume w.l.o.g.\ that $E[l^*(Z)]<\infty$, as otherwise this $Z$ is irrelevant for the problem. Letting $Z_t:=E[Z|\mathcal{F}_t]$ and $\tau^n:=\inf\{0<t\leq T:Z_t=n\}\wedge T$, we have by optional sampling and Jensen's inequality that $-E[l^*(Z)]\leq -E[l^*(Z^{n})] $ and $E[l^*(Z^{n})]<\infty$, with $Z^n:=Z_{\tau^n}\le n$. On the other hand $E[Z^nX]\to E[ZX] $ by the martingale convergence theorem. Thus
$$ E[ZX]-E[l^*(Z)]\leq \liminf_{n\to \infty}(E[Z^nX]-E[l^*(Z^n)]).$$
Therefore 
we can assume w.l.o.g.\ that $Z$ is essentially bounded from above as well as essentially bounded away from $0$.\\%
STEP 2: Define $Z^n:=E[Z|\mathcal{G}_n]$, where $\mathcal{G}_n=\sigma\{W_{kT2^{-n}}:k=0,\dots,2^n\}$.
It holds $$\textstyle Z^n = u_n(W_{T/n},\dots,W_{kT/n} ,\dots,W_{T} ),$$ for some bounded positive Borel function $u_n$ which is bounded away from $0$ and with range in $dom(l^*)$.
Moreover, $Z^n\to Z$ $P$-a.s. and by continuity of $l^*$ in its domain (see the proof of Lemma \ref{thm:rep}), $l^*(Z^n)$ is essentially bounded uniformly in $n$. Using martingale convergence again and dominated convergence we have 
$$ E[ZX]-E[l^*(Z)] = \lim_{n\to \infty}(E[Z^nX]-E[l^*(Z^n)]).$$
Thus, we may further assume w.l.o.g.\ that $Z$ is of the form
\begin{equation}
\label{eq:z_u}
Z = u(W_{T/n},\dots,W_{kT/n} ,\dots,W_{T} )\quad \text{for some } n\in \mathbb{N},
\end{equation}
with $u$ as above.\\
STEP 3: Let $\Phi\in C^\infty_c(\mathbb{R}^n) $ be the mollifier
$$\textstyle \Phi(x)=\mathbb{I}_{\{\|x\|<1\}}\lambda\exp\left\{ \frac{1}{\|x\|^2-1}  \right\},$$
with $\lambda\ge 0$ such that $\int_{\mathbb{R}^n}\Phi(x)\,dx=1$.
Define by convolution $u^{\delta}(x):= u*\delta^{-n}\Phi(x/\delta) $, $\delta>0$.
Then $u^\delta \in C^\infty(\mathbb{R}^n)$ is bounded from above, bounded away from $0$ and the derivative $ \nabla u^\delta = u*\nabla(\delta^{-n}\Phi(\cdot/\delta)) $ is bounded. We can also choose a sequence $\delta_m\to 0$ so that $u^{\delta_m} \to u$, Lebesgue a.e.\ in $\mathbb{R}^n$; this follows from the convergence over compacts of $u^\delta$ to $u$ in $L^1(dx)$ and a diagonalization argument. This shows that $u^{\delta_m}(W_{T/n},\dots,W_{kT/n} ,\dots,W_{T} )$ converges to $Z$ almost surely as $m\to \infty$, since the law of the Gaussian vector $(W_{T/n},\dots,W_{kT/n} ,\dots,W_{T} )$ is equivalent to Lebesgue in $\mathbb{R}^n$. Arguing as in the previous step, we conclude that we may further assume w.l.o.g.\ that $Z$ is given by \eqref{eq:z_u} where $u$ is smooth and with bounded derivatives. \\
STEP 4: From the previous steps, it remains to show that for every $n$, the random variable $Z = u(W_{T/n},\dots,W_{kT/n} ,\dots,W_{T} )$ can be written as ${\cal E}(\int\beta\,dW)_T$, with $\beta \in {\cal L}_b$.
 Observe that for $t\in\left[\frac{n-1}{n}T,T\right]$ we have
\begin{align*}\textstyle
E[Z|\mathcal{F}_t]= \int u(W_{T/n},\dots,W_{T(n-1)/n},W_{t}+x)] N^{T-t}(dx) =:  U^n\left(t,W_t\,;\,W_{T/n},\dots,W_{\frac{n-1}{n}T}\right),
\end{align*}
with $N^{T-t}(dx)$ the law of a centred Gaussian with variance $(T-t)\times I_d$, with $I_d$ the identity of $\mathbb{R}^{d\times d}$. 
By the mean value theorem and dominated convergence, the function $U^n$ is differentiable in the spacial arguments and the derivatives are bounded, uniformly in the time argument. Smoothness in the time argument is apparent from the density of $N^{T-t}(dx)$. 
In addition, $U^n$ is bounded away from $0$.

We now proceed by reverse induction. Assume that we have constructed a function $U^{k+1}$ such that $E[Z|{\cal F}_t]=U^{k+1}(t,W_t\,;\,W_{T/n},\dots,W_{\frac{k}{n}T})$ on $[kT/n,(k+1)T/n]$ with $U^{k+1}$ smooth, bounded from above and away from zero, as well as having bounded derivatives  in $(x,w_1,\dots,w_{k})$ uniformly in time.  
By the tower property, for $t\in [(k-1)T/n,kT/n]$ we have:
\begin{align*}
E[Z|\mathcal{F}_t]&\textstyle = E\left [ U^{k+1}(kT/n,W_{kT/n}\,;\,W_{T/n},\dots,W_{T(k-1)/n},W_{kT/n}) |\mathcal{F}_t\right]  \\& \textstyle = \int U^{k+1}(kT/n,W_{t}+x\,;\,W_{T/n},\dots,W_{T(k-1)/n},W_{t}+x)] N^{(kT/n)-t}(dx) \\
&\textstyle =:  U^k\left(t,W_t\,;\,W_{T/n},\dots,W_{\frac{k-1}{n}T}\right).
\end{align*}
By essentially the same argument as above, $U^k$ is
 smooth in time and space arguments, bounded from above and away from zero, and has bounded derivatives in $(x,w_1,\dots,w_{k-1})$ uniformly in time.

Using It\^o's formula and by uniqueness in the martingale representation, we obtain that 
$$\textstyle \beta_t=\left[{ U^k\left(t,W_t\,;\,W_{T/n},\dots,W_{\frac{k-1}{n}T}\right)}\right ]^{-1}{\nabla_x U^k\left(t,W_t\,;\,W_{T/n},\dots,W_{\frac{k-1}{n}T}\right)},$$
on $t\in [(k-1)T/n,kT/n]$. Since there are only finitely many  such intervals for fixed $n$, it follows that  
$\beta$ is essentially bounded. This concludes the proof.
\end{proof}



\end{document}